\documentclass[12pt]{amsart}
\usepackage{amsmath, amsthm, amsfonts, amscd, amssymb, eucal, latexsym, mathrsfs, appendix, parskip, tikz}
\usepackage[numbers,sort&compress]{natbib}
\usepackage{color,hyperref}
\hypersetup{colorlinks,breaklinks,
            linkcolor=red,urlcolor=red,
            anchorcolor=red,citecolor=blue}

\theoremstyle{plain}
\setlength{\textwidth}{15cm}
\setlength{\oddsidemargin}{4mm}
\setlength{\evensidemargin}{4mm}

\newtheorem*{rep@theorem}{\rep@title}
\newcommand{\newreptheorem}[2]{%
\newenvironment{rep#1}[1]{%
 \def\rep@title{#2 \ref{##1}}%
 \begin{rep@theorem}}%
 {\end{rep@theorem}}}
\makeatother

\newreptheorem{theorem}{Theorem}

\newtheorem{theorem}{Theorem}[section]

\newtheorem{lemma}[theorem]{Lemma}
\newtheorem{proposition}[theorem]{Proposition}

\newtheorem{definition}[theorem]{Definition}
\newtheorem{remark}[theorem]{Remark}

\newtheorem*{conjecture*}{Conjecture}

\allowdisplaybreaks

\begin{document}
\title{Fourier coefficients of $\times p$-invariant measures}
\author{Huichi Huang}
\address{Huichi Huang, College of Mathematics and Statistics, Chongqing University, Chongqing, 401331, PR China}
\email{huanghuichi@cqu.edu.cn}
\keywords{Fourier coefficients, density, $\times p$-invariant measure, ergodic, weakly mixing, strongly mixing}
\subjclass[2010]{Primary 37A25, 28D05, 37B99}
\date{\today}
\begin{abstract}
We consider densities $D_\Sigma(A)$, $\overline{D}_\Sigma(A)$ and $\underline{D}_\Sigma(A)$ for a subset $A$ of $\mathbb{N}$ with respect to a  sequence $\Sigma$ of finite subsets of $\mathbb{N}$ and study Fourier coefficients of ergodic, weakly mixing and strongly mixing $\times p$-invariant measures on the unit circle $\mathbb{T}$. Combining these, we  prove the following measure rigidity results: on $\mathbb{T}$, the Lebesgue measure is the only non-atomic  $\times p$-invariant measure   satisfying one of the following: (1) $\mu$ is ergodic and there exist a F\o lner sequence $\Sigma$ in $\mathbb{N}$ and a nonzero integer $l$ such that $\mu$ is $\times (p^j+l)$-invariant for  all $j$ in a subset $A$ of $\mathbb{N}$ with $D_\Sigma(A)=1$; (2) $\mu$ is weakly mixing and there exist a F\o lner sequence $\Sigma$ in $\mathbb{N}$ and a nonzero integer $l$ such that $\mu$ is $\times (p^j+l)$-invariant for  all $j$ in a subset $A$ of $\mathbb{N}$ with $\overline{D}_\Sigma(A)>0$; (3) $\mu$ is strongly mixing and there exists a nonzero integer $l$ such that $\mu$ is  $\times (p^j+l)$-invariant for  infinitely many $j$. Moreover, a $\times p$-invariant measure satisfying (2) or (3) is either a Dirac measure or the Lebesgue measure.

As an application  we prove that  for every increasing function $\tau$ defined on positive integers with $\lim_{n\to\infty}\tau(n)=\infty$, there exists a multiplicative semigroup $S_\tau$ of $\mathbb{Z}^+$ containing $p$ such that $|S_\tau\cap[1,n]|\leq (\log_p n)^{\tau(n)}$ and the Lebesgue measure is the only non-atomic ergodic $\times p$-invariant measure which is $\times q$-invariant for all $q$ in $S_\tau$.
\end{abstract}

\maketitle
\section{Introduction}

There are two motivations for this paper. Both  are related to the celebrated $\times p,\times q$ conjecture by H. Furstenberg. The first motivation is Lyons' Theorem and Rudolph-Johnson's Theorem, and the second is a theorem due to E. A. Sataev and later independently discovered by M. Einsiedler and A. Fish.

For an  integer $p$, consider the group homomorphism $T_p$~(called the $\times p$ map) on the unit circle $\mathbb{T}=\mathbb{R}/ \mathbb{Z}$ given by $T_p(x)=px \mod\mathbb{Z}$ for  all $x$ in $\mathbb{R}/ \mathbb{Z}$.

When $p$ and $q$ are positive integers greater than 1 with $\frac{\log p}{\log q}\notin\mathbb{Q}$,   H. Furstenberg gave a classification of  $\times p,\times q$-invariant closed subsets in $\mathbb{T}$~\cite[Thm. IV.1]{Furstenberg1967}.
\begin{theorem}~[Furstenberg, 1967]\

A $\times p,\times q$-invariant closed subset in $\mathbb{T}$ is either finite or $\mathbb{T}$.
\end{theorem}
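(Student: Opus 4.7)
The plan is to show that if $X\subset\mathbb{T}$ is closed, $T_p$- and $T_q$-invariant, and infinite, then $X=\mathbb{T}$. Write $\Sigma=\{p^iq^j:i,j\geq 0\}$, so that $X$ is invariant under every $T_m$ with $m\in\Sigma$.

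The first ingredient is a purely arithmetic density lemma: enumerating $\Sigma$ in increasing order as $a_1<a_2<\cdots$, one has $a_{k+1}/a_k\to 1$, or equivalently, for every $\delta>0$ there exists $N$ such that every interval $[M,(1+\delta)M]$ with $M\geq N$ meets $\Sigma$. This follows from Weyl's equidistribution theorem applied to $\{j\log p\bmod\log q\}_{j\geq 0}$, which is dense (indeed equidistributed) in $[0,\log q)$ precisely because $\log p/\log q\notin\mathbb{Q}$.

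Next I would exploit that $X$ is infinite in the compact space $\mathbb{T}$ to extract pairs $x_n,y_n\in X$ with $\xi_n:=x_n-y_n$ nonzero and $\xi_n\to 0$. Given any $\alpha\in\mathbb{T}$ and any $\eta>0$, pick $n$ with $|\xi_n|$ small enough that the density lemma produces $m\in\Sigma$ satisfying $m\xi_n\equiv\alpha\pmod 1$ up to error $\eta$; since $T_mx_n,T_my_n\in X$ by $\Sigma$-invariance, their difference $m\xi_n$ lies in $\overline{X-X}$. Varying $\alpha$ and $\eta$ then yields $\overline{X-X}=\mathbb{T}$.

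The step I expect to be the main obstacle is upgrading $\overline{X-X}=\mathbb{T}$ to $X=\mathbb{T}$: this implication is false for general closed subsets (for instance $[0,\tfrac12]$ has difference set $\mathbb{T}$), so the argument must genuinely use $\Sigma$-invariance. The plan is to pass to a $\Sigma$-minimal closed subset $Y\subseteq X$ via Zorn's lemma. If $Y$ is finite, it is a joint periodic orbit of $T_p,T_q$ contained in $\{k/N:(N,pq)=1\}$; if $X$ is still infinite one shows that $X$ must then contain infinitely many such periodic orbits from infinitely many distinct denominators, whose union is dense in $\mathbb{T}$, forcing $X=\mathbb{T}$. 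If $Y$ is infinite, the earlier steps applied inside $Y$ give $\overline{Y-Y}=\mathbb{T}$, and one combines this with the minimality of $Y$ (every $\Sigma$-orbit is dense in $Y$) together with Furstenberg's analysis of proximal pairs to conclude $Y=\mathbb{T}$. Making this last combination quantitative—fusing the semigroup-density lemma with the orbit density from minimality while controlling errors—is the delicate heart of the argument.
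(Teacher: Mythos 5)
First, note that the paper itself offers no proof of this statement: it is quoted as background with a citation to Furstenberg's 1967 paper, so your attempt can only be compared with the known argument, not with anything internal to this article.

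Your first two steps are correct and are the standard opening of that argument. Non-lacunarity of $\Sigma=\{p^iq^j\}$ does follow from the irrationality of $\log p/\log q$, and it yields the key lemma: if $t_n\to 0$ with $t_n\neq 0$, then $\bigcup_n \Sigma t_n$ is dense in $\mathbb{T}$. Since $X$ is infinite and compact it contains pairs $x_n\neq y_n$ with $\xi_n=x_n-y_n\to 0$, and since $X-X$ is closed and $\Sigma$-invariant, the lemma gives $X-X=\mathbb{T}$. You also correctly identify that this is where the easy part ends.

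The gap is the third step, and neither branch of your proposed repair works. In the branch where the minimal set $Y\subseteq X$ is finite, the assertion that $X$ ``must then contain infinitely many periodic orbits from infinitely many distinct denominators, whose union is dense'' is unjustified on both counts: nothing rules out that $X$ is, a priori, the closure of a single infinite orbit accumulating on the single periodic orbit $Y$ (excluding this is essentially the content of the theorem), and even granted infinitely many periodic orbits, density of their union is a nontrivial arithmetic statement --- the $\Sigma$-orbit of $k/N$ is $\{ks/N \bmod 1 : s\in\Sigma\}$, controlled by the subgroup of $(\mathbb{Z}/N\mathbb{Z})^{*}$ generated by $p$ and $q$, which can be very sparse. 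In the branch where $Y$ is infinite, the stated goal ``conclude $Y=\mathbb{T}$'' is structurally impossible: $\mathbb{T}$ is never $\Sigma$-minimal, because $\{0\}$ is a proper nonempty closed invariant subset, so that case must end in a contradiction (infinite minimal sets do not exist), and for this you offer only an appeal to ``Furstenberg's analysis of proximal pairs,'' i.e.\ the heart of the matter is left undone, as you yourself admit. The idea your outline is missing is a translation/stabilizer trick, not a refinement of difference sets: if $\beta$ is a \emph{periodic accumulation point} of $X$, then after replacing $\beta$ by $s\beta$ for suitable $s\in\Sigma$ its denominator $N$ is coprime to $pq$; the stabilizer $\Sigma_0=\{s\in\Sigma : s\beta=\beta\}$ then contains the semigroup generated by $p^{M},q^{M}$ (with $M$ the exponent of the subgroup generated by $p,q$ in $(\mathbb{Z}/N\mathbb{Z})^{*}$), hence is again non-lacunary; and $X-\beta$ is closed, $\Sigma_0$-invariant, and has $0$ as an accumulation point, so your own density lemma applied to $\Sigma_0$ gives $X-\beta=\mathbb{T}$, i.e.\ $X=\mathbb{T}$. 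This applies the lemma to a translate of $X$ itself rather than to $X-X$, which is exactly how one evades your $[0,\frac{1}{2}]$ obstruction. What remains --- producing such a periodic accumulation point, or disposing of the case where none exists --- is the genuinely delicate part of Furstenberg's proof, and your proposal does not contain it.
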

Motivated by this, H. Furstenberg  conjectured a classification of  $\times p,\times q$-invariant measures.

\begin{conjecture*}~[Furstenberg's $\times p,\times q$ conjecture]\
~\label{conj: F}

For two positive integers $p,q\geq 2$ with $\frac{\log p}{\log q}\notin\mathbb{Q}$, an ergodic $\times p,\times q$-invariant measure on $\mathbb{T}$ is either finitely supported or the Lebesgue measure. That is, the only non-atomic $\times p,\times q$-invariant measure on $\mathbb{T}$ is the Lebesgue measure.
\end{conjecture*}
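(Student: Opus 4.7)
Let $\mu$ be a non-atomic ergodic $\times p,\times q$-invariant Borel probability measure on $\mathbb{T}$; the goal is to show $\mu=\lambda$, the Lebesgue measure. The plan is to split on the $T_p$-metric entropy $h_\mu(T_p)$, because the two regimes require entirely different machinery, and then attack each piece.

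In the \emph{positive entropy} case I would follow the Rudolph--Johnson strategy. Pass to the natural extension $(\tilde{X},\tilde{T}_p,\tilde{T}_q,\tilde{\mu})$ and let $\Pi$ be the Pinsker $\sigma$-algebra of $\tilde{T}_p$. Since $T_p$ and $T_q$ commute, $\tilde{T}_q$ preserves $\Pi$, so one obtains a $\tilde{T}_q$-action on the conditional measures along $\Pi$-fibers. Using that $T_q$ acts on each fiber by a toral endomorphism whose semigroup orbits become equidistributed thanks to $\log p/\log q\notin\mathbb{Q}$, a recurrence/density argument forces the conditional measures to be invariant under a dense subgroup of translations, hence Haar on arcs. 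Combined with $h_\mu(T_p)>0$ this yields $\mu=\lambda$; I would cite Rudolph for the coprime case and Johnson for the general irrationality case, with Host's refinement available if extra care with the $\sigma$-algebras is needed.

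The \emph{zero entropy} case is the crux, and it is precisely the unresolved content of Furstenberg's conjecture. Using the Fourier-analytic framework of the present paper, one has $\widehat{\mu}(k)=\widehat{\mu}(pk)=\widehat{\mu}(qk)$ for every $k\in\mathbb{Z}$, so $\widehat{\mu}$ is constant on every orbit of the multiplicative semigroup $S=\langle p,q\rangle$ acting on $\mathbb{Z}\setminus\{0\}$; at the same time, non-atomicity together with Wiener's lemma gives $\frac{1}{N}\sum_{|k|\le N}|\widehat{\mu}(k)|^2\to 0$. The aim would be to upgrade orbit-constancy plus this $\ell^2$-average decay to pointwise decay $\widehat{\mu}(k)\to 0$ and conclude $\mu=\lambda$ by Riemann--Lebesgue. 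The main obstacle is that $S$-orbits are only logarithmically dense in $\mathbb{N}$, so averaging along $S$-orbits alone cannot produce the required cancellation: there is no elementary way to convert constancy of $\widehat{\mu}$ on a thin set to control on all of $\mathbb{Z}$. Closing this gap would require injecting an additional mechanism --- a Hausdorff-dimension lower bound in the spirit of Bourgain--Lindenstrauss--Michel--Venkatesh, a joining-rigidity input \`a la Host, or a genuinely new equidistribution statement for $\{p^iq^j\}$ --- and it is exactly this step at which every published attempt has stalled. I should flag honestly that a direct proof of the conjecture as stated is beyond the tools assembled in this paper; the paper's contribution, rather than resolving this zero-entropy barrier, is to sidestep it by enlarging the acting semigroup via the $\times(p^j+l)$ hypotheses appearing in the abstract.
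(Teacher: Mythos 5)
The statement you were assigned is Furstenberg's $\times p,\times q$ conjecture. The paper does not prove it --- it appears there only as an unproved conjecture motivating the actual results --- and it remains open, so there is no proof of record to compare yours against; any complete ``proof'' would have to be wrong somewhere. Your proposal, to its credit, recognizes exactly this. The positive-entropy half of your case split is indeed a known theorem (Rudolph for $p,q$ coprime, Johnson for $\log p/\log q\notin\mathbb{Q}$, both cited in the paper), and your sketch of that half is a fair outline of their strategy. But the zero-entropy case is the entire content of the conjecture, and the mechanism you propose for it cannot close the gap, for the reason you partially identify: from $\hat{\mu}(k)=\hat{\mu}(pk)=\hat{\mu}(qk)$ one only gets constancy of $\hat{\mu}$ on orbits of the multiplicative semigroup $S=\langle p,q\rangle$, and such an orbit $\{p^iq^jk\}$ meets $[1,N]$ in only $O((\log N)^2)$ points. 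Wiener's lemma gives decay of the Ces\`aro averages $\frac{1}{2N+1}\sum_{|k|\le N}|\hat{\mu}(k)|^2$, but an averaged $\ell^2$ bound is perfectly consistent with $|\hat{\mu}|$ staying bounded below on a density-zero set, and the union of finitely many $S$-orbits is exactly such a set. So ``orbit-constancy plus averaged decay implies pointwise decay'' is not a missing lemma one could hope to supply; it is false as an implication, and the extra input needed (entropy, dimension, or joining rigidity) is precisely what is unavailable at zero entropy. Your final paragraph flags this honestly, so the write-up should be read as a correct assessment of the state of the art rather than as a proof attempt with a reparable gap.

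It is also worth making explicit the relationship to what the paper actually does prove, since you gesture at it: Theorem~\ref{thm: mr} is not a proof of the conjecture, nor an approach to it, but a replacement of the hypothesis. Instead of $\times q$-invariance for a single $q$ multiplicatively independent from $p$, the paper assumes invariance under $\times(p^j+l)$ for $j$ ranging over a set of full density (ergodic case), positive upper density (weakly mixing case), or an infinite set (strongly mixing case). That hypothesis feeds directly into the Fourier identities of Theorem~\ref{thm: F}: invariance under $\times(p^j+l)$ gives $\hat{\mu}(kp^j+kl)=\hat{\mu}(k)$ for the relevant $j$, which combined with the ergodic-averaging identity forces $\hat{\mu}(kl)=1$ and hence an atom --- a two-line contradiction that has no analogue when one only assumes $\times q$-invariance for a single $q$. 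In other words, the paper buys its rigidity by enlarging the acting semigroup along the additive shifts $p^j+l$, exactly the ``sidestep'' you describe in your closing sentence.
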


The first progress was made by R. Lyons in 1988~\cite[Thm. 1]{Lyons1988}.
\begin{theorem}~[Lyons' theorem]\

Suppose $p,q$ are relatively prime. The Lebesgue measure is the only  non-atomic $\times p,\times q$-invariant measure  which is $T_p$-exact.
\end{theorem}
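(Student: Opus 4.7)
The plan is to prove $\mu$ equals Lebesgue measure by showing that $\hat{\mu}(n)=0$ for every nonzero $n\in\mathbb{Z}$. First, the $T_p$- and $T_q$-invariance of $\mu$ translate into the Fourier-side symmetries $\hat{\mu}(pn)=\hat{\mu}(n)$ and $\hat{\mu}(qn)=\hat{\mu}(n)$; hence $\hat{\mu}$ is constant along every orbit of the multiplicative semigroup generated by $p$ and $q$ acting on $\mathbb{Z}$. In particular, it suffices to treat $n$ with $\gcd(n,p)=1$, since for a general nonzero $n$ one factors $n=p^a n'$ with $\gcd(n',p)=1$ and uses $\hat{\mu}(n)=\hat{\mu}(n')$.

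Next I would encode $T_p$-exactness analytically. Writing $U_p f=f\circ T_p$ for the Koopman operator on $L^2(\mu)$, exactness is equivalent to $\bigcap_{k\ge 0}U_p^k L^2(\mu)=\mathbb{C}$, which dually says $\|U_p^{*k}f\|_2\to 0$ for every mean-zero $f\in L^2(\mu)$. Applied to the characters $e_n(x)=e^{2\pi inx}$ and unpacked via Parseval in $L^2(\mu)$, this translates into an asymptotic $\ell^2$-averaged decay of the Fourier coefficients of $\mu$ along arithmetic progressions of common difference $p^k$ as $k\to\infty$.

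The final step uses coprimality of $p$ and $q$. For fixed $n$ with $\gcd(n,p)=1$, multiplication by $q$ is a unit in $(\mathbb{Z}/p^k\mathbb{Z})^\times$ for every $k$, so the orbit $\{q^j n\bmod p^k:j\ge 0\}$ spreads inside $(\mathbb{Z}/p^k\mathbb{Z})^\times$ with controlled equidistribution. Combined with the identity $\hat{\mu}(n)=\hat{\mu}(q^j n)$ coming from $T_q$-invariance, this permits replacing $\hat{\mu}(n)$ by averages over the $q$-orbit, on which the exactness-driven decay of Step~2 forces the common value to be zero.

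The main obstacle, and the technical heart of Lyons' original argument, lies in linking the abstract exactness statement to a quantitative Fourier decay usable against the $q$-orbit. The $L^2$-convergence of conditional expectations on the decreasing filtration $\{T_p^{-k}\mathcal{B}\}_k$ must be upgraded, typically via a reverse-martingale argument coupled with a Marcinkiewicz--Zygmund type concentration inequality, to a statement quantitative enough that the $q$-orbit average in Step~3 is controlled by a vanishing sequence. Closing this gap is what ultimately forces individual Fourier coefficients to vanish and yields $\mu=$ Lebesgue.
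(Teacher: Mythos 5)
This theorem is not proved in the paper at all: it is quoted as background from Lyons' article [Lyo88], so the only internal point of comparison is the Remark at the end of Section 4, which records the Fourier-analytic characterization of exactness that drives Lyons' actual argument. Measured against that, your proposal is an outline rather than a proof, and it has a decisive structural flaw: you never use non-atomicity. Under the formulation of exactness you yourself adopt, $\bigcap_{k\ge 0}U_p^kL^2(\mu)=\mathbb{C}$, the Dirac measure $\delta_0$ qualifies (its $L^2$ space is one-dimensional, so the intersection is trivially $\mathbb{C}$), and $\delta_0$ is both $\times p$- and $\times q$-invariant; yet $\hat{\delta_0}(n)=1$ for every $n$. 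So the chain ``invariance symmetries $+$ exactness decay $+$ coprimality $\Rightarrow \hat{\mu}(n)=0$'' cannot be correct as described: any valid argument must break for $\delta_0$, i.e.\ must invoke non-atomicity at a specific step, and your sketch has no such step.

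The gaps are concrete. In Step 2, there is no ``Parseval in $L^2(\mu)$'': the characters $z^k$ are orthonormal in $L^2(\mu)$ only when $\mu$ \emph{is} Lebesgue, which is what is to be proved. What exactness genuinely yields --- and what Lyons establishes --- is the uniform mixing estimate $\lim_{j\to\infty}\sup_{k\in\mathbb{Z}}|\hat{\mu}(kp^j+l)-\hat{\mu}(k)\hat{\mu}(l)|=0$; note these quantities compare $\hat{\mu}$ against \emph{products of its own values}, and can never directly force raw decay of $\hat{\mu}$, contrary to what Step 3 asserts. In Step 3, averaging $\hat{\mu}$ over the $q$-orbit is circular: $T_q$-invariance makes $\hat{\mu}$ constant on that orbit, so the average is just $\hat{\mu}(n)$ again. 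The real use of coprimality is Euler's theorem: $q^{\phi(p^j)}\equiv 1 \pmod{p^j}$ gives $q^{\phi(p^j)}n=k_jp^j+n$ with $k_j=(q^{\phi(p^j)}-1)n/p^j$, whence the uniform estimate yields $|\hat{\mu}(n)|\,|1-\hat{\mu}(k_j)|\to 0$; the remaining and essential task is to rule out $\hat{\mu}(k_j)\to 1$, and that is exactly where non-atomicity (e.g.\ through Proposition~\ref{Fouriercoefficients} or Wiener's lemma) must enter. Note also that the uniformity in $k$ is indispensable here, since $k_j$ is a single uncontrolled integer, so the $\ell^2$-averaged decay you propose in Step 2 says nothing about it. You flag the ``quantitative link'' between exactness and usable Fourier decay as an obstacle to be closed later; but that obstacle, together with the exclusion of $\hat{\mu}(k_j)\to 1$, \emph{is} the theorem, so what you have is a plan, not a proof.
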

A measure $\mu$ is $T_p$-exact means $h_\mu(T_p,\xi)>0$ for any nontrivial finite partition $\xi$ of $\mathbb{T}$, where $h_\mu(T_p,\xi)$ stands for the measure entropy of $T_p$ with respect to a finite partition $\xi$.

In 1990, D. Rudolph improved Lyons' theorem to the following~\cite[Thm. 4.9]{Rudolph1990}.
\begin{theorem}~[Rudolph's theorem]\

Suppose $p,q$ are relatively prime.  A  $\times p, \times q$-invariant measure $\mu$ with $h_\mu(T_p)=\sup_\xi h_\mu(T_p,\xi)>0$ must be the Lebesgue measure.
\end{theorem}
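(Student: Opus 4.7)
The plan is to proceed via a Fourier-analytic approach, exploiting the fact that positive $T_p$-entropy, together with the coprimality of $p$ and $q$, should force every nonzero Fourier coefficient $\hat{\mu}(m)$ to vanish. The guiding observation is that $T_p$-invariance gives $\hat{\mu}(m) = \hat{\mu}(pm)$ and $T_q$-invariance gives $\hat{\mu}(m) = \hat{\mu}(qm)$, so $|\hat{\mu}(m)|$ is constant along the multiplicatively rich set $\{p^a q^b m : a, b \geq 0\}$. The entire game is then to produce, using positive entropy, some sequence in this set along which the Fourier coefficient is forced to decay.

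To set things up, I would code $\mathbb{T}$ by its base-$p$ expansion, turning $T_p$ into the one-sided shift on $\{0, 1, \ldots, p-1\}^{\mathbb{N}}$, with the natural cylinder partition $\xi$ being a generator. The hypothesis $h_\mu(T_p) = \sup_\eta h_\mu(T_p, \eta) > 0$ says that $\xi$ carries strictly positive Kolmogorov--Sinai entropy, so by Shannon--McMillan--Breiman the measure $\mu$ is nontrivially spread across the atoms of $\xi_N := \bigvee_{j=0}^{N-1} T_p^{-j} \xi$ on the exponential scale $e^{-Nh}$. Disintegrating $\mu$ over $\xi_N$ yields conditional measures $\mu_x^N$ supported on base-$p$ intervals of length $p^{-N}$, and understanding these conditional measures is the key object of study.

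The central step is to use the coprimality $\gcd(p,q) = 1$: since $q$ is invertible modulo $p^N$, the map $T_q$ acts as a permutation on the $p^N$ atoms of $\xi_N$ at the appropriate level, and $T_q$-invariance of $\mu$ forces the masses of these atoms to be compatible with this permutation. Coupled with the positive-entropy spread of $\mu$ over these atoms, I would try to show that the conditional measures $\mu_x^N$ become asymptotically uniform. Concretely, for a fixed nonzero frequency $m$, one writes $\hat{\mu}(m)$ as an average over $\xi_N$-atoms of local Fourier-like integrals, and exploits a Cesàro average $\frac{1}{L}\sum_{b=0}^{L-1}\hat{\mu}(q^b m)$, whose terms are all equal to $\hat{\mu}(m)$ by $T_q$-invariance but whose average over atoms must cancel because $q^b m$ equidistributes in residues modulo $p^N$.

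The main obstacle, and what I expect to be the heart of the argument, is converting the qualitative input ``$h_\mu(T_p) > 0$'' into quantitative equidistribution of $\mu_x^N$ on atoms for typical $x$. In particular, one must rule out the possibility that $\mu_x^N$ concentrates on a small subcollection of base-$p$ sub-atoms in a way compatible with the $T_q$ permutation; this is precisely the delicate combinatorial step that Rudolph handles via a careful analysis of conditional entropies and the interaction of the $T_p$-past $\sigma$-algebra with $T_q$. I would attack it by controlling the entropy $H_\mu(\xi_{N+1} \mid \xi_N)$ relative to its maximal value $\log p$ and invoking the $T_q$-equivariance to promote almost-uniformity at one level to actual uniformity at all levels; once that is achieved, the Fourier cancellation above yields $\hat{\mu}(m) = 0$ for every $m \neq 0$, and hence $\mu$ must be the Lebesgue measure.
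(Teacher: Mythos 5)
First, a point of orientation: this statement is not proved in the paper at all. It is Rudolph's theorem, quoted in the introduction as background and attributed to Rudolph's 1990 paper (Thm.\ 4.9 there); the present paper only cites it as motivation for its own, differently-hypothesized rigidity results. So your proposal cannot be compared against a proof in the paper and must stand on its own --- and it does not, for two concrete reasons.

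The first gap is factual: your central structural claim, that coprimality makes $T_q$ ``act as a permutation on the $p^N$ atoms of $\xi_N$,'' is false. The atoms of $\xi_N$ are the $p$-adic intervals $[k/p^N,(k+1)/p^N)$, and $T_q$ stretches each of them onto an arc of length $q/p^N$ covering $q$ distinct atoms; what coprimality gives is only that the finite set of endpoints $\{k/p^N\}$ is permuted by multiplication by $q$ modulo $p^N$. Consequently the asserted compatibility of atom masses with a permutation, on which your cancellation mechanism rests, does not exist, and the Ces\`aro step $\frac{1}{L}\sum_{b}\hat{\mu}(q^b m)$ is in addition circular: to get cancellation from equidistribution of $q^b m$ modulo $p^N$ you need the conditional measures $\mu^N_x$ to be nearly uniform on sub-atoms, which is exactly what you are trying to prove. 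The second gap is the one you yourself flag: the conversion of $h_\mu(T_p)>0$ into asymptotic uniformity of the conditional measures is ``deferred to the delicate combinatorial step that Rudolph handles,'' i.e.\ the heart of the theorem is assumed rather than proved. Note that positive entropy alone cannot yield such uniformity --- a non-uniform Bernoulli measure on base-$p$ digits has positive $T_p$-entropy yet conditional measures that stay far from uniform at every scale --- so everything hinges on how $T_q$-invariance is injected, and a Fourier-only injection is precisely what is not available: Rudolph's and Johnson's actual arguments work with conditional entropies and conditional measures over suitable invariant $\sigma$-algebras for the joint $\times p,\times q$ action (the route later streamlined in Einsiedler--Ward's book), not with Fourier coefficients. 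Indeed, the present paper's own Remark after Theorem 4.1 states that no characterization of $h_\mu(T_p)>0$ in terms of Fourier coefficients is known; your plan of running a purely Fourier-analytic proof from a positive-entropy hypothesis runs squarely into that open problem, which is why Lyons' Fourier argument needs the much stronger hypothesis of $T_p$-exactness.
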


Rudolph's theorem  was strengthened by  A. S. A. Johnson~\cite[Thm. A]{Johnson1992}.
\begin{theorem}~[Rudolph-Johnson's theorem]\

Suppose that $\frac{\log p}{\log q}\notin\mathbb{Q}$.  Then a  $\times p, \times q$-invariant measure with $\displaystyle h_\mu(T_p)>0$ is the Lebesgue measure.
\end{theorem}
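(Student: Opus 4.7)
The plan is to combine entropy methods on the symbolic $p$-adic model of $(T_p,\mu)$ with a rigidity argument powered by the commuting $T_q$-action and the Diophantine hypothesis $\log p/\log q\notin\mathbb{Q}$. First I would reduce the problem: pass to the natural extension so that $T_p$ becomes invertible, and then to an ergodic component of the joint $(T_p,T_q)$-action. Since $h_\mu(T_p)$ integrates over ergodic components, a positive-measure set of components still satisfies $h_\mu(T_p)>0$, so joint ergodicity may be assumed throughout.

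Next, introduce the Pinsker $\sigma$-algebra $\Pi=\Pi(T_p)$ of $T_p$. Because $T_q$ commutes with $T_p$ and preserves measure-theoretic entropy, $\Pi$ is $T_q$-invariant. Disintegrating $\mu$ over $\Pi$ gives conditional measures $\{\mu_x\}_{x\in\mathbb{T}}$, and the assumption $h_\mu(T_p)>0$ forces these to be non-atomic on a set of positive $\mu$-measure. The conditional measures inherit a weak form of $T_q$-equivariance: $T_q$ permutes $\Pi$-atoms, and the pushforward of $\mu_x$ under $T_q$ agrees with $\mu_{T_q x}$ almost surely.

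The heart of the argument, and the step I expect to be the main obstacle, is showing that $\mu$-a.e.\ conditional measure $\mu_x$ is the Haar measure of a closed subgroup of $\mathbb{T}$. Following Rudolph's blueprint, one encodes $\mathbb{T}$ by its $p$-adic expansion and compares $T_q$-iterates of pairs of points lying in a common Pinsker fiber. The positive-entropy hypothesis supplies enough horizontal spread inside fibers so that, after a Birkhoff-type average along $T_q$-orbits, the measure $\mu_x$ is seen to be invariant under a nontrivial translation by some element $t(x)\in\mathbb{T}$; ergodicity of the joint action then upgrades this to genuine translation invariance of $\mu_x$ on its support. Johnson's strengthening replaces the coprimality of $p$ and $q$ with the Diophantine condition $\log p/\log q\notin\mathbb{Q}$, using simultaneous approximation between powers of $p$ and $q$ in place of Rudolph's residue-class bookkeeping. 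This is delicate because one must extract a \emph{single} translation direction $t(x)$ working uniformly well along a density-one sequence of $T_q$-times, without the clean arithmetic structure supplied by coprimality.

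Finally, the closed subgroups of $\mathbb{T}$ are $\mathbb{T}$ itself and the finite cyclic groups $\mathbb{Z}/N\mathbb{Z}$. A normalized counting measure on $\mathbb{Z}/N\mathbb{Z}$ that is both $\times p$- and $\times q$-invariant forces congruence constraints among $p$, $q$, and $N$ that are incompatible with $\log p/\log q\notin\mathbb{Q}$ once $N$ is allowed to vary, and such finitely supported conditional measures are ruled out by $h_\mu(T_p)>0$. Hence $\mu_x$ is the Lebesgue measure on $\mathbb{T}$ for $\mu$-a.e.\ $x$, and integrating over $\Pi$ shows $\mu$ itself equals the Lebesgue measure.
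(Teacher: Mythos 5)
First, a point of order: the paper does not prove this statement at all. It is quoted as background (Johnson's Theorem A, cited as \cite{Johnson1992}), and the paper's own results neither use nor reprove it; so your proposal can only be judged on its own merits, and there it falls short of a proof. What you have written is a faithful outline of the Rudolph--Johnson strategy (natural extension, Pinsker algebra of $T_p$, $T_q$-equivariance of the Pinsker disintegration, translation invariance of conditional measures, classification of closed subgroups of $\mathbb{T}$), but the entire technical content of both Rudolph's and Johnson's papers lies precisely in the step you defer: proving that positive entropy together with the commuting $T_q$-action forces the Pinsker-conditional measures to be invariant under a dense set of translations, and doing so using only the hypothesis $\log p/\log q\notin\mathbb{Q}$ (equivalently, non-lacunarity of the semigroup $\{p^mq^n\}$) rather than coprimality. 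You yourself flag this as ``the main obstacle'' and describe what should come out of it, but no mechanism is supplied: no symbolic coding is actually set up, no specification of which sequence of $T_q$-times the averaging runs along, and no argument that the resulting translation amounts $t(x)$ are nontrivial. As it stands this is a roadmap of the known proof, not a proof.

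Two further steps are wrong rather than merely missing. (i) The opening reduction ``joint ergodicity may be assumed throughout'' is invalid for the statement as given: knowing that a positive-measure set of ergodic components has positive entropy lets you conclude those components are Lebesgue, but says nothing about the zero-entropy components, so you cannot conclude that $\mu$ itself is Lebesgue. (Indeed the statement as printed in the paper, with no ergodicity hypothesis, is false as literally written: $\tfrac12(\mathrm{Leb}+\delta_0)$ is $\times p,\times q$-invariant with $h_\mu(T_p)>0$. Johnson's actual theorem assumes $\mu$ ergodic for the joint action; your reduction quietly exposes this gap but does not repair it.) (ii) Your final step claims that normalized counting measure on $\mathbb{Z}/N\mathbb{Z}$ invariant under both maps ``forces congruence constraints\dots incompatible with $\log p/\log q\notin\mathbb{Q}$.'' That is false: for any $p,q$ and any $N$ coprime to $pq$, the uniform measure on $\{k/N : 0\le k<N\}$ is invariant under both $T_p$ and $T_q$. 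What rules out finite (atomic) conditional measures is solely the positive-entropy hypothesis; the Diophantine condition plays no role at that stage and must instead be consumed inside the translation-invariance argument you skipped.
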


 In this paper  by assuming that a non-atomic $\times p$-invariant measure $\mu$  satisfies  weaker conditions than $T_p$-exactness or positive entropy, we prove that if
 $\mu$ is invariant under enough many $\times q$-maps of special forms, then $\mu$ is the Lebesgue measure.

\begin{reptheorem}{thm: mr}
The Lebesgue measure is the only non-atomic $\times p$-invariant measure on $\mathbb{T}$ satisfying one of the following:
\begin{enumerate}
\item it is ergodic and there exist a nonzero integer $l$ and a F\o lner sequence $\Sigma=\{F_n\}_{n=1}^\infty$ in $\mathbb{N}$ such that
$\mu$ is $\times (p^j+l)$-invariant for all $j$ in some $A\subseteq\mathbb{N}$ with $D_\Sigma(A)=1$;
\item it is weakly mixing and there exist a nonzero integer $l$ and a F\o lner sequence $\Sigma=\{F_n\}_{n=1}^\infty$ in $\mathbb{N}$ such that
$\mu$ is $\times (p^j+l)$-invariant for all $j$ in some $A\subseteq\mathbb{N}$ with $\overline{D}_\Sigma(A)>0$;
\item it is strongly mixing and there exist a nonzero integer $l$ and an infinite set $A\subseteq \mathbb{N}$ such that $\mu$ is $\times  (p^j+l)$-invariant for all $j$ in $A$.
\end{enumerate}
Moreover, a $\times p$-invariant measure satisfying (2) or (3) is either a Dirac measure  or the Lebesgue measure.
\end{reptheorem}
Here $D_\Sigma(A)$ and $\overline{D}_\Sigma(A)$ are density and upper density of $A$ with respect to $\Sigma$ respectively. See  Section 2 for their definitions.

The second motivation is a theorem which is independently discovered by E. A. Sataev in 1975~\cite[Thm. 1]{Sataev1975} and M. Einsiedler and A. Fish in 2010~\cite[Thm. 1.2]{EisiendlerFish2010}.
\begin{theorem}

~\label{thm: EF}
For a multiplicative semigroup $S$ of positive integers with
$$\liminf_{n\to\infty}\frac{\log|S\cap[1,n]|}{\log n}>0,$$ if a Borel probability measure on $\mathbb{T}$ is an ergodic $\times p$-invariant measure for some $p$ in $S$ and  is $\times q$-invariant for every $q$ in $S$, then it  is either finitely supported or Lebesgue measure.
\end{theorem}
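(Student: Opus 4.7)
My plan is a Fourier--analytic argument combined with the mean ergodic theorem, in the spirit of Lyons. I first reduce to the non-atomic case: a $T_p$-ergodic measure $\mu$ with any atom must be supported on a single finite $T_p$-orbit (atoms of a $T_p$-invariant measure are forced to be $T_p$-periodic points, and ergodicity then concentrates $\mu$ on one such orbit), which gives the finitely-supported alternative. So I may assume $\mu$ is non-atomic, and it suffices to show $\widehat\mu(k)=0$ for every $k\neq 0$. The $T_q$-invariance for every $q\in S$ translates to the Fourier identity
\[
\widehat\mu(qk)=\widehat\mu(k)\qquad\text{for all }q\in S,\ k\in\mathbb Z,
\]
while non-atomicity yields Wiener's lemma $\tfrac{1}{2N+1}\sum_{|m|\le N}|\widehat\mu(m)|^2\to 0$. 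The density hypothesis furnishes an $\alpha>0$ with $|S\cap[1,N]|\ge N^\alpha$ for all large $N$.

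I argue by contradiction: suppose $\widehat\mu(k_0)\neq 0$ for some $k_0\neq 0$. The Fourier identity immediately gives $|\widehat\mu(m)|=|\widehat\mu(k_0)|$ on the orbit $Sk_0$, producing $\gtrsim N^\alpha$ large Fourier coefficients in $[1,N]$. This alone is still compatible with Wiener's lemma when $\alpha<1$, so an amplification is needed. For this I apply the $L^2$ mean ergodic theorem for $T_p$ to the character $e_{k_0}$: expanding
\[
\left\|\frac{1}{N}\sum_{n=0}^{N-1}e_{k_0p^n}\right\|_{L^2(\mu)}^2\longrightarrow|\widehat\mu(k_0)|^2,
\]
and rewriting $\widehat\mu(k_0(p^n-p^m))=\widehat\mu(k_0(p^{|n-m|}-1))$ using $p^{\min(n,m)}\in S$ and the Fourier identity, I obtain the Ces\`aro convergence
\[
\lim_{N\to\infty}\frac{1}{N}\sum_{r=1}^{N}\operatorname{Re}\widehat\mu\bigl(k_0(p^r-1)\bigr)=|\widehat\mu(k_0)|^2>0.
\]
Since $|\widehat\mu|\le 1$, this forces the set $R:=\{r:|\widehat\mu(k_0(p^r-1))|\ge c\}$ to have positive lower density in $\mathbb N$ for some $c>0$, and each new seed $k_r:=k_0(p^r-1)$ with $r\in R$ carries its own $S$-orbit of large Fourier coefficients.

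I then iterate: applying the same ergodic amplification to each seed $k_r$ in place of $k_0$ produces, after $t$ rounds, a rich family of seeds of the form $k_0\prod_{i=1}^{t}(p^{r_i}-1)$ indexed by positive-density choices at each stage, each attached to an $S$-orbit on which $|\widehat\mu|$ stays bounded below by a uniform constant. The final contradiction will come from showing that the union of all these $S$-orbits, intersected with $[1,N]$, has positive arithmetic density and thus violates Wiener's lemma.

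The main obstacle is precisely this last density accounting. The naive estimate $|Sk_r\cap[1,N]|\gtrsim(N/p^r)^\alpha$ summed over $r\in R\cap[1,\log_p N]$ telescopes only to $O(N^\alpha)=o(N)$, so a single round of iteration never suffices when $\alpha<1$. The multi-round iteration must therefore convert the purely multiplicative abundance of seeds (captured by the polynomial lower bound $|S\cap[1,N]|\ge N^\alpha$) into genuine arithmetic density of the large-Fourier-coefficient set, while controlling the substantial overlaps that can occur among $S$-orbits of different seeds. This overlap and density bookkeeping, which exploits the multiplicative semigroup structure of $S$, is where I expect the real technical work to lie.
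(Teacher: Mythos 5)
First, a point of orientation: the paper does not prove this statement at all. Theorem \ref{thm: EF} is quoted as motivation and attributed to Sataev and to Einsiedler--Fish; the paper's own machinery (Theorem \ref{thm: F} plus Theorem \ref{thm: mr}) yields rigidity only for semigroups whose generators have the special form $p^j+l$ with a \emph{fixed} shift $l$, precisely because then the invariance hypothesis aligns exactly with the frequencies $kp^j+kl$ appearing in the ergodic average, and no counting of sparse frequency sets is ever needed. So there is no ``paper proof'' to match your attempt against; the comparison has to be with the cited literature, and on its own terms your proposal has a genuine gap.

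The gap is the one you yourself flag, and it is not mere bookkeeping: the route you propose cannot be completed. Your ingredients (reduction to the non-atomic case, Wiener's lemma, the Fourier form $\hat\mu(qk)=\hat\mu(k)$ of $S$-invariance, and the mean-ergodic-theorem amplification, which is essentially Theorem \ref{thm: F}(1) with $k=k_0$, $l=-k_0$) are all sound, modulo a small slip: expanding the $L^2$ norm gives the triangularly weighted average $\frac{2}{N^2}\sum_{r<N}(N-r)\operatorname{Re}\hat\mu(k_0(p^r-1))$, not the unweighted Ces\`aro average, though the latter does follow by pairing the ergodic average directly against $e_{k_0}$. The fatal problem is quantitative. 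To contradict Wiener's lemma you need $\gtrsim N$ frequencies $m\in[1,N]$ with $|\hat\mu(m)|\geq c$ for a \emph{fixed} $c>0$. One round of amplification produces seeds $k_0(p^r-1)$ for a positive density set of exponents $r$, i.e.\ only $O(\log N)$ seeds below $N$, and their $S$-orbits contribute at most $O(N^{\alpha}\log N)$ frequencies in $[1,N]$. Iterating does not change this: after $t$ rounds the seeds $k_0\prod_{i=1}^t(p^{r_i}-1)$ below $N$ number at most $\binom{O(\log N)}{t}$, while the lower bound on $|\hat\mu|$ at the surviving seeds degrades like $c\mapsto c^2/2$ per round, i.e.\ doubly exponentially in $t$ (so your phrase ``bounded below by a uniform constant'' holds only for boundedly many rounds), and the density of admissible indices $r_i$ at each stage degrades with it. Optimizing over $t$ one can never produce more than $N^{\alpha+o(1)}$ frequencies carrying a usable lower bound, which is $o(N)$ whenever $\alpha<1$. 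Multiplicatively generated sets are simply too sparse additively, and no overlap analysis changes that.

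This is why the actual proofs of Sataev and Einsiedler--Fish are not a refinement of this counting scheme but rest on a different mechanism exploiting the polynomial growth of the semigroup itself; if you want a complete argument you should work from \cite{Sataev1975} or \cite{EisiendlerFish2010} rather than trying to push the Wiener-density route. Alternatively, note that what \emph{can} be proved by the present paper's aligned-frequency method is the weaker (but still nontrivial) Theorem \ref{prop: sg}, where the semigroup is engineered to consist of elements $p^j+1$ with $j$ ranging over a set of full density along a F\o lner sequence.
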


As an application of Theorem~\ref{thm: mr}, we prove that  there exists  a multiplicative semigroup $S$ of positive integers with $\displaystyle\lim_{n\to\infty}\frac{\log|S\cap[1,n]|}{\log n}=0$ such that Theorem~\ref{thm: EF} still holds~(see Theorem~\ref{prop: sg}).

The paper is organized as follows.

Firstly we give definitions of density functions of a subset $A$ of nonnegative integers with respect to a F\o lner sequence. In Section 3, we lay down some basic facts about Fourier coefficients of a measure on the unit circle. In Section 4, we give the characterizations of ergodic, weakly mixing and strongly mixing $\times p$-invariant measures via their Fourier coefficients. In the last section, we prove the main theorem, Theorem~\ref{thm: mr}. Applying it, we prove Theorem~\ref{prop: sg} at the end.

\section*{Acknowledgements}
This paper was started when I was a postdoctoral fellow under ERC Advanced Grant No. 267079 in University of M\"{u}nster.  I am grateful to Joachim Cuntz for his support and addressing me the problem of $\times p,\times q$.

Part of work was carried out during a visit to Shanghai in Spring 2016.
I thank all members in Research Center for Operator Algebras in East China Normal University,  in particular,  Huaxin Lin for warm hospitality. I  thank Kunyu Guo and Kai Wang for their support during my stay in Fudan University.

Thanks to suggestions of  the anonymous referee and Anatole Katok, various changes are made. In particular, they kindly  helps me to improve Theorem~\ref{prop: sg}.

\section{Preliminaries}

Let $\mathbb{N}$ stand for the set of nonnegative integers and $\mathbb{Z}^+$ stand for the set of positive integers. Throughout this article, for two integers $a<b$, we denote the set $\{a,\cdots, b\}$ by $[a, b]$. Denote by $|F|$ the cardinality of  a set $F$.

The following definition of F\o lner sequence in $\mathbb{N}$ is a special case of F\o lner sequences in an amenable semigroup~\cite[p.2]{Bowley1971}.

\begin{definition}
A sequence $\Sigma=\{F_n\}_{n=1}^\infty$ of finite subsets in $\mathbb{N}$ is called a {\bf F\o lner sequence}
if $$\displaystyle\lim_{n\to\infty}\dfrac{|(F_n+m)\Delta F_n|}{|F_n|}=0$$ for every $m$ in $\mathbb{N}$.
\end{definition}

The density $D(A)$ of a subset $A$ of $\mathbb{N}$ is given by $\displaystyle D(A)=\lim_{n\to\infty}\frac{|A\cap [0,n-1]|}{n}$. The upper density of $A$, $\displaystyle\overline{D}(A):=\limsup_{n\to\infty}\frac{|A\cap [0,n-1]|}{n}$ and the lower density of $A$, $\displaystyle\underline{D}(A):=\liminf_{n\to\infty}\frac{|A\cap [0,n-1]|}{n}$.  These densities are defined via the sequence  of finite subsets $\{[0, n-1]\}_{n=1}^\infty$ in $\mathbb{N}$.
Generalizing these, one can define  densities of $A$ with respect to every  sequence of finite subsets of $\mathbb{N}$.

\begin{definition}
Let $\Sigma=\{F_n\}_{n=1}^\infty$ be a sequence of finite subsets of $\mathbb{N}$. The density $D_\Sigma(A)$ of a subset $A$ of $\mathbb{N}$ with respect to $\Sigma$ is given by
$$\displaystyle D_\Sigma(A)=\lim_{n\to\infty}\frac{|A\cap F_n|}{|F_n|}.$$
The upper density $\displaystyle \overline{D}_\Sigma(A):=\limsup_{n\to\infty}\frac{|A\cap F_n|}{|F_n|}$, and the lower density $\displaystyle \underline{D}_\Sigma(A):=\liminf_{n\to\infty}\frac{|A\cap F_n|}{|F_n|}$.
\end{definition}

\begin{remark}
\begin{enumerate}
\item Denote $\cup F_n$ by $F$. Then $D_\Sigma(A)=D_\Sigma(A\cap F)$.
\item The density $D_\Sigma(A)$ depends on choices of $\Sigma$.    For instance, let $\displaystyle A=\cup_{n=1}^\infty [2^n, 2^n+n]$. For the F\o lner sequence
$\Sigma=\{F_m\}_{m=1}^\infty$ with $F_m=[1,m]$,  one has $D_\Sigma(A)=0$. On the other hand $D_{\Sigma'}(A)=1$ for the F\o lner sequence $\Sigma'=\{[2^n, 2^n+n]\}_{n=1}^\infty$.
\end{enumerate}
\end{remark}

Within this paper,  a measure on a compact metrizable $X$ always means a Borel probability measure.  A measure $\mu$  is called {\bf non-atomic} if $\mu\{x\}=0$ for every $x$ in $X$.

A  topological dynamical system consists of a compact metrizable space $X$ and a continuous map $T:X\to X$.

A measure $\mu$ on $X$ is called {\bf $T$-invariant} if $\mu(B)=\mu(T^{-1} B)$ for any Borel subset $B$ of $X$. A $T$-invariant measure $\mu$ is called {\bf ergodic} if every Borel subset B with $T^{-1}B=B$ satisfies that $\mu(B)^2=\mu(B)$,    it is called {\bf weakly mixing} if $\mu\times\mu$ is an ergodic $T\times T$-invariant measure on $X\times X$, and  it is called {\bf strongly mixing} if $\displaystyle \lim_{j\to\infty} \mu(T^{-j}A\cap B)=\mu(A)\mu(B)$ for all Borel subsets $A, B$ in $X$.

It's well-known that strongly mixing$\Longrightarrow$weakly mixing$\Longrightarrow$ergodic.

Within this paper, we only consider that $X=\mathbb{T}$ and $T=T_p$ is the $\times p$ map on $\mathbb{T}=\mathbb{R}/ \mathbb{Z}$ defined by $T_p(x)=px\mod\mathbb{Z}$ for all $x$ in $\mathbb{R}/ \mathbb{Z}$ and $p$ in $\mathbb{Z}$.

\section{Some basic facts about Fourier coefficients}\

Denote the support of $\mu$ by ${\rm Supp}(\mu)$.  For $n$ in $\mathbb{Z}$, the {\bf Fourier coefficient} $\hat{\mu}(n)$ of a measure $\mu$ on $\mathbb{T}$ is given by $\hat{\mu}(n)=\int_\mathbb{T} z^n\,d\mu(z)$ when taking
$\mathbb{T}=\{z\in\mathbb{C}| |z|=1\}$.
\begin{lemma}~\label{f0}\
For nonzero  $k$ in $\mathbb{Z}$ and $c$ in $\mathbb{T}$, $\int_{\mathbb{T}}|z^k-c|^2\,d\mu(z)=0$  if and only if  ${\rm Supp}(\mu)\subseteq \{z|z^k=c\}$.
\end{lemma}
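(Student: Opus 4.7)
The plan is to treat this as a direct consequence of the standard fact that for a non-negative continuous function $f$ on a compact metrizable space, $\int f\,d\mu = 0$ if and only if $f$ vanishes on ${\rm Supp}(\mu)$. Here the relevant function is $f(z)=|z^k-c|^2$, which is non-negative and continuous on $\mathbb{T}$, so the lemma reduces to identifying where $f$ vanishes, namely on $\{z\in\mathbb{T}\mid z^k=c\}$.

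For the ``if'' direction I would argue as follows: if ${\rm Supp}(\mu)\subseteq\{z\mid z^k=c\}$, then $f\equiv 0$ on ${\rm Supp}(\mu)$, and since $\mu(\mathbb{T}\setminus{\rm Supp}(\mu))=0$, the integral $\int_{\mathbb{T}}f\,d\mu$ equals $\int_{{\rm Supp}(\mu)}f\,d\mu=0$.

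For the ``only if'' direction I would argue by contrapositive: suppose ${\rm Supp}(\mu)\not\subseteq\{z\mid z^k=c\}$. Pick $z_0\in{\rm Supp}(\mu)$ with $z_0^k\neq c$, so that $f(z_0)>0$. By continuity of $f$ there is an open neighborhood $U$ of $z_0$ on which $f\geq\tfrac12 f(z_0)>0$, and by the definition of support $\mu(U)>0$. Then
\[
\int_{\mathbb{T}}|z^k-c|^2\,d\mu(z) \;\geq\; \int_{U}f\,d\mu \;\geq\; \tfrac12 f(z_0)\,\mu(U)\;>\;0,
\]
contradicting the hypothesis.

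There is no real obstacle here; the only thing to be careful about is the elementary measure-theoretic point that a non-negative continuous function with zero integral against $\mu$ must vanish on ${\rm Supp}(\mu)$, which is the standard argument via continuity and the defining property of the support. The hypothesis $k\neq 0$ is what makes the set $\{z\in\mathbb{T}\mid z^k=c\}$ a proper (finite) subset of $\mathbb{T}$, but it is not otherwise used in the proof itself.
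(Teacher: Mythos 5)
Your proof is correct: the paper itself dismisses this lemma with the single word ``Obvious,'' and your argument --- that a non-negative continuous function integrates to zero against $\mu$ if and only if it vanishes on ${\rm Supp}(\mu)$, applied to $f(z)=|z^k-c|^2$ --- is exactly the standard reasoning the author had in mind. Both directions are handled correctly, so there is nothing to add.
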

\begin{proof}
Obvious.
\end{proof}

\begin{proposition}~\label{Fouriercoefficients}\
 For a nonzero integer $k$, one has $|\hat{\mu}(k)|<1$  if and only if there is no $c$ in $\mathbb{T}$ such that ${\rm Supp}(\mu)\subseteq \{z|z^k=c\}$.

\end{proposition}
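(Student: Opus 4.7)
The plan is to prove the contrapositive: for a nonzero integer $k$, $|\hat{\mu}(k)|=1$ if and only if there exists $c\in\mathbb{T}$ such that ${\rm Supp}(\mu)\subseteq\{z\mid z^k=c\}$. Since $|\hat{\mu}(k)|\le\int_{\mathbb{T}}|z^k|\,d\mu=1$ holds automatically, the proposition follows once this equivalence is established. The direction from support containment to $|\hat{\mu}(k)|=1$ is immediate: if ${\rm Supp}(\mu)\subseteq\{z\mid z^k=c\}$, then $z^k=c$ for $\mu$-almost every $z$, hence $\hat{\mu}(k)=\int_{\mathbb{T}}z^k\,d\mu=c$ has modulus $1$.

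For the harder direction, assume $|\hat{\mu}(k)|=1$ and set $c:=\hat{\mu}(k)$, so $|c|=1$. I would then compute the $L^2$-distance $\int_{\mathbb{T}}|z^k-c|^2\,d\mu(z)$ by expanding
\[
\int_{\mathbb{T}}|z^k-c|^2\,d\mu(z)=\int_{\mathbb{T}}\bigl(|z^k|^2-c\,\bar{z}^k-\bar{c}\,z^k+|c|^2\bigr)\,d\mu(z)=2-c\,\overline{\hat{\mu}(k)}-\bar{c}\,\hat{\mu}(k)=2-|c|^2-|c|^2=0.
\]
Applying Lemma~\ref{f0} to this vanishing $L^2$-integral then yields ${\rm Supp}(\mu)\subseteq\{z\mid z^k=c\}$, completing the equivalence.

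There is no real obstacle here: the only substantive step is recognizing that the natural candidate for $c$ is $\hat{\mu}(k)$ itself, after which the expansion of $|z^k-c|^2$ collapses by the definition of $\hat{\mu}(k)$ and its conjugate $\overline{\hat{\mu}(k)}=\int_{\mathbb{T}}\bar z^k\,d\mu(z)$. The proof is essentially the equality case in the triangle inequality $|\int f\,d\mu|\le\int|f|\,d\mu$ applied to $f(z)=z^k$, packaged through Lemma~\ref{f0}.
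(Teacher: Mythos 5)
Your proof is correct and is essentially the paper's argument in contrapositive form: both hinge on Lemma~\ref{f0} together with the expansion $\int_{\mathbb{T}}|z^k-c|^2\,d\mu(z)=2-2\,\mathrm{Re}(\bar{c}\,\hat{\mu}(k))$, and your choice $c=\hat{\mu}(k)$ is the same phase-alignment trick the paper uses when it picks $c$ so that $\bar{c}\,\hat{\mu}(k)=|\hat{\mu}(k)|$. The only cosmetic difference is that you handle the easy direction directly from the definition of the support rather than through the integral identity.
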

\begin{proof}
Let $k$ be a nonzero integer. By Lemma~\ref{f0}, it suffices to show that  $|\hat{\mu}(k)|<1$  if and only if $\int_{\mathbb{T}}|z^k-c|^2\,d\mu(z)>0$ for all $c\in\mathbb{T}$.

If  $|\hat{\mu}(k)|<1$, then for any $c\in\mathbb{T}$, we have
\begin{align*}
&\int_{\mathbb{T}}|z^k-c|^2\,d\mu(z)=\int_{\mathbb{T}}(z^k-c)(\bar{z}^k-\bar{c})d\mu(z)  \\
&=2-2Re(\bar{c}\hat{\mu}(k))\geqslant 2-2|\hat{\mu}(k)|>0.
\end{align*}

Conversely assume that $\int_{\mathbb{T}}|z^k-c|^2\,d\mu(z)>0$ for all $c\in\mathbb{T}$. When choosing $c\in\mathbb{T}$ such that $c\hat{\mu}(k)=|\hat{\mu}(k)|$, we get
$0<\int_{\mathbb{T}}|z^k-c|^2\,d\mu(z)=2-2|\hat{\mu}(k)|$, which implies that $|\hat{\mu}(k)|<1$.
\end{proof}

\section{Fourier coefficients of ergodic, weakly mixing or strongly mixing $\times p$-invariant measures}

In this section, we give  characterizations of ergodic, weakly mixing and strongly mixing $\times p$-invariant measures via their Fourier coefficients.
\begin{theorem}~\label{thm: F}
The following are true.
\begin{enumerate}
\item A measure $\mu$  on $\mathbb{T}$ is an ergodic $\times p$-invariant measure  if and only if
$$\displaystyle \lim_{n\to\infty}\frac{1}{|F_n|}\sum_{j\in F_n}\hat{\mu}(kp^j+l)=\hat{\mu}(k)\hat{\mu}(l)$$ for every F\o lner sequence $\Sigma=\{ F_n\}_{n=1}^\infty$ in
$\mathbb{N}$ and all $k,l$ in $\mathbb{Z}$.
\item  A measure $\mu$ on $\mathbb{T}$ is a weakly mixing $\times p$-invariant measure  if and only if
$$\displaystyle \lim_{n\to\infty}\frac{1}{|F_n|}\sum_{j\in F_n}|\hat{\mu}(kp^j+l)-\hat{\mu}(k)\hat{\mu}(l)|^2=0$$ for every F\o lner sequence $\Sigma=\{ F_n\}_{n=1}^\infty$ in
$\mathbb{N}$ and all $k,l$ in $\mathbb{Z}$.
\item  A measure $\mu$ on $\mathbb{T}$ is a strongly mixing $\times p$-invariant measure  if and only if
$$\displaystyle\lim_{j\to\infty}\hat{\mu}(kp^j+l)=\hat{\mu}(k)\hat{\mu}(l)$$ for all $k,l$ in $\mathbb{Z}$.
\end{enumerate}
\end{theorem}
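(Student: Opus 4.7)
The plan is to prove each of (1)--(3) by specializing the standard $L^2$-characterization of the corresponding mixing property to the exponential test functions $e_k(z)=z^k$, $k\in\mathbb{Z}$, whose linear span is dense in $L^2(\mu)$.

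The forward direction is straightforward. If $\mu$ is $\times p$-invariant, then the Koopman operator $U_{T_p}$ is an isometry of $L^2(\mu)$ satisfying $U_{T_p}^{j}e_k=e_{kp^{j}}$, so that
\[
\langle U_{T_p}^{j}e_k, e_{-l}\rangle=\int z^{kp^{j}+l}\,d\mu(z)=\hat\mu(kp^{j}+l),
\]
while $\langle e_k,1\rangle\,\overline{\langle e_{-l},1\rangle}=\hat\mu(k)\hat\mu(l)$. The F\o lner version of the von Neumann mean ergodic theorem thus turns ergodicity into the identity in (1), the $L^2$ characterization of weak mixing (F\o lner average of squared correlation defects tending to $0$) turns weak mixing into (2), and the correlation characterization of strong mixing turns strong mixing into (3).

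For the reverse direction, we must first extract $\times p$-invariance from the Fourier hypothesis. The key trick is to compare the F\o lner average along $\Sigma=\{F_n\}$ with the average along the translated sequence $\{F_n+1\}$. Since $|\hat\mu|\leq 1$ and $|(F_n+1)\triangle F_n|/|F_n|\to 0$, the two averages have the same limit. Applying the hypothesis of (1) with $k$ replaced by $kp$ gives
\[
\frac{1}{|F_n|}\sum_{j\in F_n}\hat\mu(kp^{j+1}+l)\longrightarrow\hat\mu(kp)\hat\mu(l),
\]
but this shifted sum also tends to $\hat\mu(k)\hat\mu(l)$, whence $\hat\mu(kp)\hat\mu(l)=\hat\mu(k)\hat\mu(l)$ for every $l\in\mathbb{Z}$; taking $l=0$ yields $\hat\mu(kp)=\hat\mu(k)$, i.e.\ $\mu$ is $\times p$-invariant. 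For (2) we first deduce the identity of (1) from that of (2) by Jensen's inequality $|\mathrm{avg}|^2\leq\mathrm{avg}(|\cdot|^2)$, and then apply the same trick. For (3) the argument is even simpler, since $\hat\mu(kp^{j+1}+l)$ is a tail of $\hat\mu(kp^{j}+l)$ and must have the same limit.

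With $\times p$-invariance in hand, the final step is to upgrade the Fourier identity to the full $L^2$ characterization. The identity extends by (bi)linearity to trigonometric polynomials, and then by a density argument in $L^2(\mu)$ --- using the uniform bound $\bigl\|\frac{1}{|F_n|}\sum_{j\in F_n}U_{T_p}^{j}h\bigr\|_2\leq\|h\|_2$ --- to all $f,g\in L^2(\mu)$. This yields the mean ergodic convergence of (1), hence ergodicity, and the single-time convergence of (3), hence strong mixing. For (2), the quadratic nature of the relevant functional makes the density step more delicate; the approximation errors have to be controlled via $|a+b|^2\leq 2|a|^2+2|b|^2$ together with Cauchy--Schwarz, which is the main technical obstacle in the argument.
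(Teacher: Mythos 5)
Your proposal is correct and follows essentially the same route as the paper: the forward directions come from the F\o lner mean ergodic theorem specialized to the exponentials $z^k$, $\times p$-invariance is extracted in the converse directions via the comparison of averages along $\{F_n\}$ and $\{F_n+1\}$ (the paper phrases this as $\{F_n+1\}$ being again a F\o lner sequence), and the converses are completed by density of trigonometric polynomials in $L^2(\mu)$, exactly as in the paper. The only organizational difference is in part (2): where you invoke the abstract $L^2$ characterization of weak mixing and handle the quadratic density step directly on $\mathbb{T}$, the paper works with its definition of weak mixing (ergodicity of $\mu\times\mu$ under $T_p\times T_p$) and reduces (2) to (1) on $\mathbb{T}^2$ by establishing the product identity $\lim_{n\to\infty}\frac{1}{|F_n|}\sum_{j\in F_n}\hat{\mu}(k_1p^j+l_1)\hat{\mu}(k_2p^j+l_2)=\hat{\mu}(k_1)\hat{\mu}(k_2)\hat{\mu}(l_1)\hat{\mu}(l_2)$, using the same $|a+b|^2\leq 2|a|^2+2|b|^2$ and Cauchy--Schwarz estimates you describe.
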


To prove (1) and (2) of Proposition~\ref{thm: F}, we need a preliminary result, which is a special case of von  Neumann's mean ergodic theorem for amenable semigroups proved by Bowley~\cite[Thm. 1]{Bowley1971}.
\begin{lemma}~\label{lem: met}
For a topological dynamical system $(X, T)$, if $\nu$ is an ergodic $T$-invariant measure on $X$, then for every F\o lner sequence $\{F_n\}_{n=1}^\infty$ in $\mathbb{N}$, one has
$$\displaystyle \lim_{n\to\infty} \frac{1}{|F_n|} \sum_{j\in F_n} f(T^jx)=\int_X f\, d\nu$$ for every $f$ in $L^2(X,\nu)$~(note that the identity holds with respect to $L^2$-norm).
Consequently $$\displaystyle \lim_{n\to\infty} \frac{1}{|F_n|} \sum_{j\in F_n}\int_X f(T^jx)g(x)\,d\nu(x)=\int_X f\, d\nu\,\int_X g\,d\nu$$ for every $f,g$ in $L^2(X,\nu)$
\end{lemma}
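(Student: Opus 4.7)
The plan is to give the Hilbert-space proof of von Neumann's mean ergodic theorem, adapted to averaging over Følner sets instead of intervals. Define the Koopman operator $U \colon L^2(X,\nu) \to L^2(X,\nu)$ by $(Uf)(x) = f(Tx)$; since $T$ preserves $\nu$, $U$ is an isometry. I would then invoke the orthogonal decomposition
$$L^2(X,\nu) = \mathcal{I} \oplus \overline{(I-U)L^2(X,\nu)},$$
where $\mathcal{I} = \{f \in L^2 : Uf = f\}$ is the subspace of $U$-invariant vectors. This decomposition holds because for an isometry $U$ the fixed subspaces of $U$ and of its adjoint coincide, so $\mathcal{I} = \ker(I-U) = \ker(I-U)^* = \overline{(I-U)L^2}^{\perp}$. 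By ergodicity of $\nu$, a $T$-invariant $L^2$ function is constant, so the orthogonal projection onto $\mathcal{I}$ sends $f$ to the constant $\int_X f\,d\nu$.

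Next I would verify the conclusion separately on each summand. On $\mathcal{I}$ the averages $\frac{1}{|F_n|}\sum_{j \in F_n} U^j f = f = \int_X f\,d\nu$ for every $n$, so there is nothing to prove. For $f$ in the range of $I-U$, write $f = g - Ug$; the averages telescope as
$$\frac{1}{|F_n|}\sum_{j \in F_n} U^j f = \frac{1}{|F_n|}\left(\sum_{j \in F_n} U^j g - \sum_{j \in F_n + 1} U^j g\right),$$
and since $U$ is an isometry, the $L^2$-norm is bounded above by $\frac{|F_n \,\Delta\, (F_n + 1)|}{|F_n|}\,\|g\|_2$, which tends to zero by the Følner property (applied with $m=1$). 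A standard $\varepsilon/3$ argument then extends the vanishing to all of $\overline{(I-U)L^2}$, using that the averaging operators have operator norm at most one. Combining the two pieces via the decomposition gives the asserted $L^2$-convergence
$$\frac{1}{|F_n|}\sum_{j \in F_n} f \circ T^j \longrightarrow \int_X f\,d\nu$$
for every $f \in L^2(X,\nu)$.

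The second (``consequently'') statement is an immediate consequence of Cauchy--Schwarz: viewing the averages as elements of $L^2$ and pairing against $\bar g$,
$$\frac{1}{|F_n|}\sum_{j \in F_n}\int_X f(T^j x) g(x)\,d\nu(x) = \left\langle \frac{1}{|F_n|}\sum_{j \in F_n} U^j f,\, \bar g\right\rangle \longrightarrow \left\langle \int_X f\,d\nu,\, \bar g\right\rangle = \int_X f\,d\nu \int_X g\,d\nu.$$

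The only genuinely delicate step is the orthogonal decomposition together with the density argument extending the coboundary estimate to its closure; the Følner telescoping itself is routine, since the definition of Følner sequence in the paper is tailored (shifts by positive $m$) to make exactly this bound work. Everything else reduces to standard Hilbert-space manipulations and does not require any special structure of $(X,T)$ beyond the measure-preserving action.
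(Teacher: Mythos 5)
Your proof is correct, but it takes a genuinely different route from the paper: the paper does not prove this lemma at all, it simply states it as a special case of von Neumann's mean ergodic theorem for amenable semigroup actions and cites Bowley~\cite[Thm. 1]{Bowley1971}. You instead give the self-contained Hilbert-space argument: the Koopman isometry $U$, the orthogonal splitting $L^2(X,\nu)=\ker(I-U)\oplus\overline{(I-U)L^2(X,\nu)}$ (valid because the fixed vectors of an isometry and of its adjoint coincide), telescoping of the averages on coboundaries controlled by $|F_n\,\Delta\,(F_n+1)|/|F_n|$, a density argument using that the averaging operators are contractions, and ergodicity to identify the projection onto the invariant vectors with $f\mapsto\int_X f\,d\nu$; the ``consequently'' clause then follows by Cauchy--Schwarz. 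All of these steps are sound, and your telescoping bound uses exactly the shift $m=1$ that the paper's definition of F\o lner sequence provides. The only blemish is notational: $\ker(I-U)^*$ should read $\ker\bigl((I-U)^*\bigr)=\ker(I-U^*)$, and the claim that fixed vectors of $U$ and $U^*$ coincide deserves its one-line justification (if $U^*f=f$ then $\|f\|^2=\langle f,Uf\rangle\leq\|f\|\,\|Uf\|$ forces equality in Cauchy--Schwarz, hence $Uf=f$). What your approach buys is a fully self-contained proof exhibiting that nothing beyond the single-shift F\o lner property is needed; what the paper's citation buys is brevity and placement of the lemma in the general amenable-semigroup framework, of which this is the $(\mathbb{N},+)$ special case.
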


\begin{proof}~[Proof of Theorem~\ref{thm: F}]
\begin{enumerate}
\item Suppose $\mu$ is an ergodic $\times p$-invariant measure on $\mathbb{T}$. Denote the $\times p$ map by $T_p$. Consider the measurable dynamical system
$(\mathbb{T}, T_p,\mu)$.  Using Lemma~\ref{lem: met}, we get
$$\lim_{n\to\infty} \frac{1}{|F_n|}\sum_{j\in F_n}\int_\mathbb{T} f(T_p^j(x))g(x)\,d\mu(x)=\int_\mathbb{T} f\, d\mu\int_\mathbb{T} g\, d\mu$$ for all continuous functions $f,g$ on $\mathbb{T}$.  By choosing $f=z^k$ and $g=z^l$, we prove the necessity.

Now assume that $\displaystyle \lim_{n\to\infty}\frac{1}{|F_n|}\sum_{j\in F_n}\hat{\mu}(kp^j+l)=\hat{\mu}(k)\hat{\mu}(l)$ for every F\o lner sequence $\{ F_n\}_{n=1}^\infty$ in $\mathbb{N}$ and all $k,l$ in $\mathbb{Z}$. Let $l=0$, we have $$\displaystyle \lim_{n\to\infty}\frac{1}{|F_n|}\sum_{j\in F_n}\hat{\mu}(kp^j)=\hat{\mu}(k)$$ for every $k$ in
$\mathbb{Z}$. Replacing $k$ by $kp$, one has
\begin{align*}
\hat{\mu}(kp)&=\lim_{n\to\infty}\frac{1}{|F_n|}\sum_{j\in F_n}\hat{\mu}(kp^{j+1})  \\
&=\lim_{n\to\infty}\frac{1}{|F_n+1|}\sum_{j\in F_n+1}\hat{\mu}(kp^j)  \\
&\tag{$\{F_n+1\}_{n=1}^\infty$ is a F\o lner sequence in $\mathbb{N}$.} \\
&=\hat{\mu}(k)
\end{align*}
for every $k$ in $\mathbb{N}$.
Hence $\mu$ is $\times p$-invariant.

From $\displaystyle \lim_{n\to\infty}\frac{1}{|F_n|}\sum_{j\in F_n}\hat{\mu}(kp^j+l)=\hat{\mu}(k)\hat{\mu}(l)$, we have
$\displaystyle \lim_{n\to\infty}\frac{1}{|F_n|}\sum_{j\in F_n}\int_\mathbb{T} f(T_p^jx)g(x)\,d\mu(x)=\int_\mathbb{T}f\,\int_\mathbb{T} g$ for all polynomials on $\mathbb{T}$. Polynomials are dense in  $L^2(\mathbb{T},\mu)$,  so  $\displaystyle \lim_{n\to\infty}\frac{1}{|F_n|}\sum_{j\in F_n}\int_\mathbb{T} f(T_p^jx)g(x)\,d\mu(x)=\int_\mathbb{T}f\,d\mu\int_\mathbb{T} g\,d\mu$ for all $f,g\in L^2(\mathbb{T},\mu)$. In particular, it is true for $f=g=1_A$ for a Borel subset $A$ with $T_p^{-1}A=A$. Hence $\mu(A)=\mu(A)^2$. This proves that $\mu$ is ergodic.

\item Suppose $\mu$ is a weakly mixing $\times p$-invariant measure on $\mathbb{T}$, which means, $\mu\times\mu$ is an ergodic $T_p\times T_p$-invariant measure on $\mathbb{T}^2$. Applying the second identity of Lemma~\ref{lem: met} to $X=\mathbb{T}^2$, $\nu=\mu\times\mu$ and letting $f(z_1,z_2)=z_1^kz_2^{-k}$ and $g(z_1,z_2)=z_1^l z_2^{-l}$ for any $k,l$ in $\mathbb{Z}$, we have
$$\displaystyle \lim_{n\to\infty}\frac{1}{|F_n|}\sum_{j\in F_n}|\hat{\mu}(kp^j+l)|^2=|\hat{\mu}(k)|^2 |\hat{\mu}(l)|^2.$$
Note that
\begin{align*}
&|\hat{\mu}(kp^j+l)-\hat{\mu}(k)\hat{\mu}(l)|^2  \\
=&|\hat{\mu}(kp^j+l)|^2+|\hat{\mu}(k)|^2 |\hat{\mu}(l)|^2-\hat{\mu}(kp^j+l)\hat{\mu}(-k)\hat{\mu}(-l)-\hat{\mu}(-kp^j-l)\hat{\mu}(k)\hat{\mu}(l).
\end{align*}

So we get
\begin{align*}
&\lim_{n\to\infty}\frac{1}{|F_n|}\sum_{j\in F_n}|\hat{\mu}(kp^j+l)-\hat{\mu}(k)\hat{\mu}(l)|^2 \\
=&\lim_{n\to\infty}\frac{1}{|F_n|}\sum_{j\in F_n}[|\hat{\mu}(kp^j+l)|^2+|\hat{\mu}(k)|^2 |\hat{\mu}(l)|^2-\hat{\mu}(kp^j+l)\hat{\mu}(-k)\hat{\mu}(-l)-\hat{\mu}(-kp^j-l)\hat{\mu}(k)\hat{\mu}(l)]  \\
\tag{Use (1) since  that $\mu$ is weakly mixing implies ergodicity of $\mu$.}  \\
=&|\hat{\mu}(k)|^2 |\hat{\mu}(l)|^2+|\hat{\mu}(k)|^2 |\hat{\mu}(l)|^2-|\hat{\mu}(k)|^2 |\hat{\mu}(l)|^2-|\hat{\mu}(k)|^2 |\hat{\mu}(l)|^2=0
\end{align*}
for all $k,l$ in $\mathbb{Z}$.

On the other hand, suppose $$\displaystyle \lim_{n\to\infty}\frac{1}{|F_n|}\sum_{j\in F_n}|\hat{\mu}(kp^j+l)-\hat{\mu}(k)\hat{\mu}(l)|^2=0$$ for every F\o lner sequence $\{ F_n\}_{n=1}^\infty$ in $\mathbb{N}$ and all $k,l$ in $\mathbb{Z}$.

Firstly we have $$\displaystyle \lim_{n\to\infty}\frac{1}{|F_n|}\sum_{j\in F_n}\hat{\mu}(kp^j+l)=\hat{\mu}(k)\hat{\mu}(l).$$ So by (1) $\mu$ is an ergodic $\times p$-invariant measure. To prove $\mu\times \mu$ is an ergodic $T_p\times T_p$-invariant measure on $\mathbb{T}^2$, it suffices to prove that
$$\lim_{n\to\infty}\frac{1}{|F_n|}\sum_{j\in F_n}\int_{\mathbb{T}^2} \,f((T_p\times T_p)^j(z_1,z_2))g(z_1,z_2)\,d\mu(z_1)\,d\mu(z_2)=\int_{\mathbb{T}^2} f\,d\mu\,d\mu \int_{\mathbb{T}^2} g\,d\mu\,d\mu$$ for all continuous functions $f$ and $g$ on $\mathbb{T}^2$, which is equivalent to that
$$\lim_{n\to\infty}\frac{1}{|F_n|}\sum_{j\in F_n}\hat{\mu}(k_1p^j+l_1)\hat{\mu}(k_2p^j+l_2)=\hat{\mu}(k_1)\hat{\mu}(k_2)\hat{\mu}(l_1)\hat{\mu}(l_2)$$ for all $k_1,k_2,l_1,l_2$ in $\mathbb{Z}$ by letting $f=z_1^{k_1}z_2^{k_2}$ and $g=z_1^{l_1}z_2^{l_2}$ whose linear spans are dense in $C(\mathbb{T}^2)$.

Note that
\begin{align*}
&|\hat{\mu}(k_1p^j+l_1)\hat{\mu}(k_2p^j+l_2)-\hat{\mu}(k_1)\hat{\mu}(k_2)\hat{\mu}(l_1)\hat{\mu}(l_2)| \\
\leq &|\hat{\mu}(k_1p^j+l_1)[\hat{\mu}(k_2p^j+l_2)-\hat{\mu}(k_2)\hat{\mu}(l_2)]|+|[\hat{\mu}(k_1p^j+l_1)-\hat{\mu}(k_1)\hat{\mu}(l_1)]\hat{\mu}(k_2)\hat{\mu}(l_2)|  \\
\leq &|\hat{\mu}(k_2p^j+l_2)-\hat{\mu}(k_2)\hat{\mu}(l_2)|+|\hat{\mu}(k_1p^j+l_1)-\hat{\mu}(k_1)\hat{\mu}(l_1)|
\end{align*}
for all $k_1,k_2,l_1,l_2$ in $\mathbb{Z}$.

Hence we have
\begin{align*}
&\lim_{n\to\infty}\frac{1}{|F_n|}\sum_{j\in F_n}|\hat{\mu}(k_1p^j+l_1)\hat{\mu}(k_2p^j+l_2)-\hat{\mu}(k_1)\hat{\mu}(k_2)\hat{\mu}(l_1)\hat{\mu}(l_2)|^2  \\
\leq &\lim_{n\to\infty}\frac{1}{|F_n|}\sum_{j\in F_n} [|\hat{\mu}(k_2p^j+l_2)-\hat{\mu}(k_2)\hat{\mu}(l_2)|+|\hat{\mu}(k_1p^j+l_1)-\hat{\mu}(k_1)\hat{\mu}(l_1)|]^2 \\
\tag{Cauchy-Schwarz inequality}  \\
\leq & 2\lim_{n\to\infty}\frac{1}{|F_n|}\sum_{j\in F_n} [|\hat{\mu}(k_2p^j+l_2)-\hat{\mu}(k_2)\hat{\mu}(l_2)|^2+|\hat{\mu}(k_1p^j+l_1)-\hat{\mu}(k_1)\hat{\mu}(l_1)|^2]
=&0.
\end{align*}
Using the inequality
$(\frac{|x_1|+\cdots+|x_n|}{n})^2\leq \frac{|x_1|^2+\cdots+|x_n|^2}{n}$, we have
$$\lim_{n\to\infty}\frac{1}{|F_n|}\sum_{j\in F_n}|\hat{\mu}(k_1p^j+l_1)\hat{\mu}(k_2p^j+l_2)-\hat{\mu}(k_1)\hat{\mu}(k_2)\hat{\mu}(l_1)\hat{\mu}(l_2)|=0.$$
This completes the proof.

\item If $\mu$ is strongly mixing, then $\displaystyle\lim_{j\to\infty} \mu(T_p^{-j}A\cap B)=\mu(A)\mu(B)$ for all Borel subsets $A$ and $B$ in $\mathbb{T}$.  This means
$\displaystyle\lim_{j\to\infty} \int_\mathbb{T} 1_A(T_p^j x)1_B(x)\,d\mu(x)=\int_\mathbb{T} 1_A\,d\mu\int_\mathbb{T} 1_B\,d\mu$ for all Borel subsets $A$ and $B$, where $1_A$ stands for the characteristic function of $A$.

 Note that linear combinations of characteristic functions are dense in $L^2(\mathbb{T},\mu)$, so $\lim_{j\to\infty} f(T_p^j x)g(x)\,d\mu(x)=\int_\mathbb{T} f\,d\mu\int_\mathbb{T} g\,d\mu$ for all $f,g$ in $C(\mathbb{T})$. In particular, this holds for $f=z^k$ and $g=z^l$ for all $k,l$ in $\mathbb{Z}$, which means
$$\lim_{j\to\infty} \hat{\mu}(kp^j+l)=\hat{\mu}(k)\hat{\mu}(l)$$ for all $k,l\in\mathbb{Z}$.

 On the other hand, if a measure $\mu$ satisfies that
 $$\lim_{j\to\infty} \hat{\mu}(kp^j+l)=\hat{\mu}(k)\hat{\mu}(l)$$ for all $k,l\in\mathbb{Z}$. Let $l=0$ and replace $k$ by $kp$. Then we have
 $$\hat{\mu}(kp)=\lim_{j\to\infty} \hat{\mu}(kp^{j+1})=\hat{\mu}(k)$$ for all $k$ in $\mathbb{Z}$.

  Linear combinations of $z^k$ and $z^l$ are polynomials on $\mathbb{T}$, which is dense in $L^2(\mathbb{T},\mu)$. Hence
 $$\lim_{j\to\infty} \mu(f(T_p^j)g)=\mu(f)\mu(g)$$ for all $f,g$ in $L^2(\mathbb{T},\mu)$. In particular, it holds for $f=1_A$ and $g=1_B$ for any Borel subsets $A,B$ of
 $\mathbb{T}$, which completes the proof.

 \end{enumerate}
\end{proof}

\begin{remark}
\begin{enumerate}
\item As shown in~\cite{Lyons1988}, a measure $\mu$ is $T_p$-exact iff
$$\lim_{j\to\infty} \sup_{k\in\mathbb{Z}}|\hat{\mu}(kp^j+l)-\hat{\mu}(k)\hat{\mu}(l)|=0$$ for every $l$ in $\mathbb{Z}$. Hence $T_p$-exactness is much stronger than being strongly mixing.
\item So far it is unknown how to characterize that $h_\mu(T_p)>0$ via Fourier coefficients of $\mu$.
\end{enumerate}
\end{remark}

\section{Rigidity of $\times p$-invariant measures}

With the above preliminaries, we are ready to prove the main theorem.
\begin{theorem}~\label{thm: mr}
The Lebesgue measure is the only non-atomic $\times p$-invariant measure on $\mathbb{T}$ satisfying one of the following:
\begin{enumerate}
\item it is ergodic and there exist a nonzero integer $l$ and a F\o lner sequence $\Sigma=\{F_n\}_{n=1}^\infty$ in $\mathbb{N}$ such that
$\mu$ is $\times (p^j+l)$-invariant for all $j$ in some $A\subseteq\mathbb{N}$ with $D_\Sigma(A)=1$;
\item it is weakly mixing and there exist a nonzero integer $l$ and a F\o lner sequence $\Sigma=\{F_n\}_{n=1}^\infty$ in $\mathbb{N}$ such that
$\mu$ is $\times (p^j+l)$-invariant for all $j$ in some $A\subseteq\mathbb{N}$ with $\overline{D}_\Sigma(A)>0$;
\item it is strongly mixing and there exist a nonzero integer $l$ and an infinite set $A\subseteq \mathbb{N}$ such that $\mu$ is $\times  (p^j+l)$-invariant for all $j$ in $A$.
\end{enumerate}
Moreover, a $\times p$-invariant measures satisfying (2) or (3) is either a Dirac measure or the Lebesgue measure.
\end{theorem}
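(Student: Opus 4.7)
The plan is to derive, in each of the three cases, the single identity $\hat{\mu}(k)\bigl(1-\hat{\mu}(kl)\bigr)=0$ for every $k\in\mathbb{Z}$, and then invoke Proposition~\ref{Fouriercoefficients}. The starting observation is purely algebraic: $\mu$ being $\times(p^j+l)$-invariant forces $\hat{\mu}(k(p^j+l))=\hat{\mu}(k)$ for every $k\in\mathbb{Z}$ and every $j\in A$, since push-forward under $T_N$ sends $z^k$ to $z^{Nk}$ in the identification used to define $\hat{\mu}$.

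For case~(1) I apply Theorem~\ref{thm: F}(1) with the pair $(k,kl)$ in place of $(k,l)$, so that the Ces\`aro average of $\hat{\mu}(kp^j+kl)$ along $\Sigma$ converges to $\hat{\mu}(k)\hat{\mu}(kl)$. Splitting the sum over $F_n\cap A$ and $F_n\setminus A$, using $|\hat{\mu}|\le 1$ together with $D_\Sigma(A)=1$, the first subsum contributes $\hat{\mu}(k)$ in the limit and the second is negligible, which yields $\hat{\mu}(k)=\hat{\mu}(k)\hat{\mu}(kl)$. For case~(2), Theorem~\ref{thm: F}(2) says the Ces\`aro averages of $|\hat{\mu}(kp^j+kl)-\hat{\mu}(k)\hat{\mu}(kl)|^2$ tend to $0$; on $j\in F_n\cap A$ each summand equals $|\hat{\mu}(k)|^2|1-\hat{\mu}(kl)|^2$, and dropping the other nonnegative terms and taking $\limsup$ gives $\overline{D}_\Sigma(A)\,|\hat{\mu}(k)|^2\,|1-\hat{\mu}(kl)|^2=0$, which forces the same identity because $\overline{D}_\Sigma(A)>0$. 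For case~(3), Theorem~\ref{thm: F}(3) gives $\hat{\mu}(kp^j+kl)\to\hat{\mu}(k)\hat{\mu}(kl)$ as $j\to\infty$, and restricting $j$ to the infinite set $A$ sends the left-hand side to $\hat{\mu}(k)$.

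Once $\hat{\mu}(k)\bigl(1-\hat{\mu}(kl)\bigr)=0$ is in hand, Proposition~\ref{Fouriercoefficients} finishes everything. For each $k\neq 0$, either $\hat{\mu}(k)=0$ or $\hat{\mu}(kl)=1$; in the latter case the proof of Proposition~\ref{Fouriercoefficients} pins down the constant $c$ as $\hat{\mu}(kl)=1$, so $\mathrm{Supp}(\mu)\subseteq\{z:z^{kl}=1\}$, a finite set (note $kl\neq 0$ since $l\neq 0$). Under non-atomicity the second alternative is impossible, so $\hat{\mu}(k)=0$ for every $k\neq 0$, i.e., $\mu$ is the Lebesgue measure. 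For the \emph{moreover} clause, any $\mu$ satisfying (2) or (3) with finite support is ergodic and so uniformly distributed on a single periodic $T_p$-orbit of some length $n\geq 1$; but weak mixing (implied by both (2) and (3)) is incompatible with $n\geq 2$, because on such an orbit the Koopman operator is a cyclic permutation with nontrivial point spectrum, so $n=1$ and $\mu$ is a Dirac measure. The main delicate point is case~(2): the hypothesis $\overline{D}_\Sigma(A)>0$ is weaker than positive lower density, so one must genuinely exploit the nonnegativity of the $|\cdot|^2$-summands and replace the plain limit of case~(1) by a $\limsup$ argument.
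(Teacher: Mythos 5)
Your treatment of the three main cases is correct and is essentially the paper's own argument: the same algebraic identity $\hat{\mu}(k(p^j+l))=\hat{\mu}(k)$ for $j\in A$, the same application of Theorem~\ref{thm: F} with the pair $(k,kl)$, the same splitting of the Ces\`aro average over $F_n\cap A$ and $F_n\setminus A$ in case (1), the same nonnegativity-plus-$\limsup$ argument in case (2), passage to the limit along the infinite set $A$ in case (3), and finally Proposition~\ref{Fouriercoefficients} to rule out $|\hat{\mu}(kl)|=1$ for a non-atomic measure. That part needs no changes.

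The gap is in the \emph{moreover} clause. Applied to a measure $\mu$ satisfying (2) or (3) that is not Lebesgue, the first part of the theorem gives only that $\mu$ fails to be non-atomic, i.e.\ that $\mu$ has \emph{at least one atom}; it does not give that $\mu$ has finite support. Your argument (``any $\mu$ satisfying (2) or (3) with finite support is ergodic and so uniformly distributed on a single periodic $T_p$-orbit\dots'') therefore starts from a hypothesis you have not established, and it is silent about the case that actually has to be excluded: a measure with one atom together with a nontrivial continuous part. What is needed is the implication ``weakly mixing $+$ has an atom $\Rightarrow$ Dirac''. The paper proves exactly this (Lemma~\ref{lem: w}): if $x$ is an atom with $\lambda=\mu\{x\}<1$, then taking $A=X\setminus\{x\}$ and $B=\{x\}$, the quantity $\mu(T^{-j}A\cap B)$ takes only the values $0$ and $\lambda$, so $|\mu(T^{-j}A\cap B)-\mu(A)\mu(B)|\geq\min\{\lambda(1-\lambda),\lambda^2\}>0$ for every $j$, contradicting the Ces\`aro characterization of weak mixing. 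Alternatively, you could repair your route by first proving the standard fact that an \emph{ergodic} invariant measure with an atom is uniformly distributed on a finite periodic orbit (forward images of an atom of maximal mass are atoms of the same, maximal, mass, hence form a finite periodic orbit, which is invariant modulo null sets and so has full measure by ergodicity); after that your Koopman point-spectrum argument does finish the job. As written, however, the step from ``not Lebesgue'' to ``finite support'' is missing, and that is precisely the content the moreover clause requires.
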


\begin{proof}~[Proof of the first part of Theorem~\ref{thm: mr}]

\begin{enumerate}
\item Suppose $\mu$ is an ergodic $\times p$-invariant measure  and  there exist a nonzero integer $l$ and a F\o lner sequence $\Sigma=\{F_n\}_{n=1}^\infty$ such that
$\mu$ is $\times (p^j+l)$-invariant for all $j$ in some $A\subseteq\mathbb{N}$ with $D_\Sigma(A)=1$.

If $\mu$ is not Lebesgue measure, then there exists nonzero $k$ in $\mathbb{Z}$ such that $0\hat{\mu}(k)$ is nonzero.

By Theorem~\ref{thm: F}(1), one has
$$\lim_{n\to\infty}\frac{1}{|F_n|}\sum_{j\in F_n}\hat{\mu}(kp^j+kl)=\hat{\mu}(k)\hat{\mu}(kl).$$
Note that
\begin{align*}
&\frac{1}{|F_n|}\sum_{j\in F_n}\hat{\mu}(kp^j+kl)=\frac{1}{|F_n|}[\sum_{j\in F_n\cap A}+\sum_{j\in F_n\setminus A}]\hat{\mu}(kp^j+kl)   \\
=&\frac{|F_n\cap A|}{|F_n|}\hat{\mu}(k)+\frac{1}{|F_n|}\sum_{j\in F_n\setminus A}]\hat{\mu}(kp^j+kl)\to \hat{\mu}(k)
\end{align*}
as $n\to\infty$. Hence $\hat{\mu}(k)=\hat{\mu}(k)\hat{\mu}(kl)$. This implies that $\hat{\mu}(kl)=1$ which contradict that $\mu$ is non-atomic according to
Proposition~\ref{Fouriercoefficients}.

\item Suppose $\mu$ is a weakly mixing $\times p$-invariant measure  and  there exist a nonzero integer $l$ and a F\o lner sequence $\Sigma=\{F_n\}_{n=1}^\infty$ such that
$\mu$ is $\times (p^j+l)$-invariant for all $j$ in some $A\subseteq\mathbb{N}$ with $\overline{D}_\Sigma(A)>0$.

If $\mu$ is not Lebesgue measure, then there exists nonzero $k$ in $\mathbb{Z}$ such that $\hat{\mu}(k)$ is nonzero.

By Theorem~\ref{thm: F}(2), one has
$$\lim_{n\to\infty}\frac{1}{|F_n|}\sum_{j\in F_n}|\hat{\mu}(kp^j+kl)-\hat{\mu}(k)\hat{\mu}(kl)|^2=0.$$
It follows that
\begin{align*}
0=&\limsup_{n\to\infty}\frac{1}{|F_n|}\sum_{j\in F_n}|\hat{\mu}(kp^j+kl)-\hat{\mu}(k)\hat{\mu}(kl)|^2  \\
\geq &\limsup_{n\to\infty}\frac{1}{|F_n|}\sum_{j\in F_n\cap A}|\hat{\mu}(kp^j+kl)-\hat{\mu}(k)\hat{\mu}(kl)|^2 \\
=&\limsup_{n\to\infty}\frac{1}{|F_n|}\sum_{j\in F_n\cap A}|\hat{\mu}(k)-\hat{\mu}(k)\hat{\mu}(kl)|^2  \\
=&|\hat{\mu}(k)-\hat{\mu}(k)\hat{\mu}(kl)|^2 \overline{D}_\Sigma(A).
\end{align*}
Hence $\hat{\mu}(k)-\hat{\mu}(k)\hat{\mu}(kl)=0$ which implies that $\hat{\mu}(kl)=1$. This again leads to a contradiction.

\item Assume that $\mu$ is a strongly mixing $\times p$-invariant measure and there exist a nonzero integer $l$ and an infinite  $A\subseteq\mathbb{N}$ such that
$\mu$ is $\times (p^j+l)$-invariant for all $j$ in $A$.

If $\mu$ is not Lebesgue measure, then there exists nonzero $k$ in $\mathbb{Z}$ such that $\hat{\mu}(k)$ is nonzero.

By Theorem~\ref{thm: F}(3), we have
$$\lim_{\substack{j\to\infty \\ j\in A}} \hat{\mu}(kp^j+kl)=\hat{\mu}(k)\hat{\mu}(kl).$$ On the other hand, for all $j\in A$, one has $\hat{\mu}(kp^j+kl)=\hat{\mu}(k)$.  So $\hat{\mu}(k)=\hat{\mu}(k)\hat{\mu}(kl)$. Again this leads to a contradiction.
\end{enumerate}
We finish the proof the first part of Theorem~\ref{thm: mr}.
\end{proof}

Before proceeding to the proof of the second part  of Theorem~\ref{thm: mr}, we need a lemma.

An {\bf atom} for a measure $\mu$ on a compact metrizable space $X$ is a point $x$ in $X$ such that $\mu\{x\}>0$.
\begin{lemma}~\label{lem: w}
Let  $T:X\to X$ be a continuous map on a compact metrizable space $X$. If a $T$-invariant measure $\mu$ has an atom $x$ with $\mu\{x\}<1$, then $\mu$ is not weakly mixing.
\end{lemma}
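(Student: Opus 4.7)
The plan is to argue by contradiction: assume $\mu$ is weakly mixing---hence ergodic---and derive a structural result that forces $\mu$ to be uniformly supported on a finite $T$-cycle of length at least $2$. A nontrivial $T\times T$-invariant subset of $X\times X$ of intermediate $\mu\times\mu$-measure will then contradict ergodicity of $\mu\times\mu$.

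First I would observe that, by $T$-invariance, $\mu\{T(y)\}=\mu(T^{-1}\{T(y)\})\geq \mu\{y\}$ for every atom $y$, so $T$ carries atoms to atoms and the sequence $n\mapsto \mu\{T^n(x)\}$ is nondecreasing. Since $\mu$ is a probability measure, only finitely many atoms can have mass $\geq \mu\{x\}>0$, so the forward orbit of $x$ must be eventually periodic. Let $C=\{y_0,\ldots,y_{m-1}\}$ be the resulting periodic cycle, with $T(y_i)=y_{i+1}$ (indices modulo $m$). Along the cycle, monotonicity collapses to equality, so all points of $C$ have a common mass $b>0$. Moreover $\mu(T^{-1}\{y_{i+1}\})=b=\mu\{y_i\}$ while $y_i\in T^{-1}\{y_{i+1}\}$, so $\mu(T^{-1}\{y_{i+1}\}\setminus\{y_i\})=0$; summing over $i$ yields $\mu(T^{-1}C\,\triangle\, C)=0$.

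Ergodicity of $\mu$ (implied by weak mixing) together with $\mu(C)\geq b>0$ then forces $\mu(C)=1$, so $\mu$ is the uniform probability measure on the $m$ points of $C$. The hypothesis $\mu\{x\}<1$ excludes $m=1$, so $m\geq 2$. Finally consider the diagonal $\Delta=\{(y_i,y_i):0\leq i<m\}\subseteq X\times X$. The map $T\times T$ cyclically permutes the $m$ points of $\Delta$, so modulo $\mu\times\mu$-null sets $(T\times T)^{-1}\Delta=\Delta$; yet $(\mu\times\mu)(\Delta)=m\cdot (1/m)^2=1/m\in(0,1)$. This contradicts ergodicity of $\mu\times\mu$, completing the proof.

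The substantive content is the first paragraph: any atom lies in a finite $T$-cycle of equal-mass atoms, and this cycle is almost invariant under $T^{-1}$. Once that structural fact is in hand, ergodicity collapses the support of $\mu$ onto the cycle, and the diagonal inside $C\times C$ immediately breaks weak mixing. The only point requiring real care is distinguishing forward invariance of $C$ under $T$ (automatic from the cycle structure) from invariance modulo $\mu$-null sets under $T^{-1}$, for which one needs the equality of atomic masses along the cycle.
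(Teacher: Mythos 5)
Your proof is correct, but it takes a genuinely different route from the paper. The paper argues directly from the Ces\`aro characterization of weak mixing (Walters, Thm.~1.24): taking $A=X\setminus\{x\}$ and $B=\{x\}$, the quantity $\mu(T^{-j}A\cap B)$ can only equal $0$ or $\lambda=\mu\{x\}$, and both values are at distance at least $\min\{\lambda(1-\lambda),\lambda^2\}>0$ from $\mu(A)\mu(B)=\lambda(1-\lambda)$, so the averaged squared deviations cannot tend to zero --- a three-line computation. You instead prove a structural result: under mere ergodicity, an atom forces $\mu$ to be the uniform measure on a finite periodic orbit (atoms push forward to atoms with nondecreasing mass, the forward orbit is finite, masses equalize around the cycle, and the cycle is invariant modulo null sets), after which the diagonal of the cycle is an almost-invariant subset of $X\times X$ of measure $1/m\in(0,1)$, contradicting ergodicity of $\mu\times\mu$. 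Your argument is longer but yields more: it classifies ergodic invariant measures with an atom, and it uses weak mixing only through ergodicity of $\mu$ and of $\mu\times\mu$, never the Ces\`aro characterization. One small point to make explicit: the paper defines ergodicity via \emph{strictly} invariant sets ($T^{-1}B=B$), whereas you twice apply it to sets invariant only modulo $\mu$-null (resp.\ $\mu\times\mu$-null) sets; this equivalence is standard (e.g.\ Walters, Thm.~1.5), but since your whole argument leans on it, it deserves a citation or a sentence replacing the almost-invariant set by a strictly invariant one differing from it by a null set.
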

\begin{proof}
Suppose $\mu$ is weakly mixing and  has an atom $x$ with $\lambda=\mu\{x\}<1$.

Note that $\mu$ is weakly mixing if and only if
$$\lim_{n\to\infty}\frac{1}{n}\sum_{j=0}^{n-1}|\mu(T^{-j}A\cap B)-\mu(A)\mu(B)|^2=0$$ for all Borel subsets $A,B$ of $X$~\cite[Defn. 1.5(i)\&Thm. 1.24]{Walters1982}.

Choose $A=X\setminus\{x\}$ and $B=\{x\}$. Note that $\mu(T^{-j}A\cap B)$ can only have two possible values: 0 or $\lambda$, hence for all $j$, we have
$$|\mu(T^{-j}A\cap B)-\mu(A)\mu(B)|\geq \min\{\lambda(1-\lambda), \lambda-\lambda(1-\lambda)\}\geq c$$ for some constant $c>0$. This leads to a contradiction.
\end{proof}

Now we are ready to finish the proof of Theorem~\ref{thm: mr}.

\begin{proof}~[Proof of the second part of Theorem~\ref{thm: mr}]\

Suppose $\mu$ is a measure satisfying (2) or (3). By the first part of Theorem~\ref{thm: mr}, if $\mu$ is not a Lebesgue measure, then $\mu$ has an atom. By Lemma~\ref{lem: w}, we obtain that $\mu$ is a Dirac measure at some point $z$ in $\mathbb{T}$.  
\end{proof}

Next we prove the following.
\begin{theorem}~\label{prop: sg}
Let $\tau:\mathbb{Z}^+\to\mathbb{R}$ be an arbitrary increasing function with $\displaystyle\lim_{n\to\infty}\tau(n)=\infty$.
Then there exists a multiplicative semigroup $S_\tau$ of $\mathbb{Z}^+$ containing $p$ and satisfying:
\begin{enumerate}
\item $|S_\tau\cap[1,n]|\leq (\log_p n)^{\tau(n)}$;
\item the Lebesgue measure is the only non-atomic ergodic $\times p$-invariant measure which is $\times q$-invariant for all $q$ in $S_\tau$.
\end{enumerate}
In particular, there exists a multiplicative semigroup $S$ of $\mathbb{Z}^+$ containing $p$ and satisfying:
\begin{enumerate}
\item $\displaystyle\lim_{n\to\infty}\frac{\log |S\cap[1,n]|}{\log n}=0$;
\item the Lebesgue measure is the only non-atomic ergodic $\times p$-invariant measure which is $\times q$-invariant for all $q$ in $S$.
\end{enumerate}
\end{theorem}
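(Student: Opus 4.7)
The natural strategy is to apply Theorem~\ref{thm: mr}(1) with $l=1$: I will construct a subset $A\subseteq\mathbb{N}$ together with a F\o lner sequence $\Sigma$ satisfying $D_\Sigma(A)=1$, and then let $S_\tau$ be the multiplicative semigroup of $\mathbb{Z}^+$ generated by $\{p\}\cup\{p^a+1:a\in A\}$. Any non-atomic ergodic $\times p$-invariant measure that is $\times q$-invariant for every $q\in S_\tau$ is in particular $\times(p^a+1)$-invariant for every $a\in A$, so Theorem~\ref{thm: mr}(1) forces it to be the Lebesgue measure. The task thus reduces to choosing $A$ sparse enough that the cardinality bound in~(1) holds.

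I would take $A=\bigsqcup_{k\geq 1}B_k$ where $B_k=[m_k,m_k+\ell_k-1]$ are disjoint intervals with $\ell_k\to\infty$ and very large gaps between them. The sequence $\Sigma:=\{B_k\}_{k\geq 1}$ is F\o lner, because the interval $B_k$ of length $\ell_k$ satisfies $|(B_k+m)\Delta B_k|/|B_k|\leq 2m/\ell_k\to 0$ for every fixed $m$, and $D_\Sigma(A)=1$ since $B_k\subseteq A$. The parameters $\ell_k$ and $m_k$ will be chosen inductively so that the counting function $M(N):=|A\cap[1,N]|$ satisfies $M(N)+2\leq\tau(p^N)$ for every sufficiently large $N$. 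This is straightforward because $\tau$ is increasing and unbounded: after $B_1,\dots,B_{k-1}$ are fixed I take $\ell_k=k$ and then choose $m_k$ large enough that $\tau(p^{m_k})\geq\ell_1+\cdots+\ell_k+2$, whereupon monotonicity of $\tau$ maintains the sparsity bound throughout $B_k$ and until the next block appears.

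For the counting itself, every element of $S_\tau\cap[1,p^N]$ has the form $p^{k_0}\prod_{a\in A}(p^a+1)^{k_a}$ with finitely many positive $k_a$ and $k_0+\sum_{a\geq 1}k_a\log_p(p^a+1)\leq N$. Since $\log_p(p^a+1)\geq a\geq 1$, only $a\in A\cap[1,N]$ can contribute and $k_0+\sum_a k_a\leq N$, so the number of such tuples is at most $\binom{N+M(N)+1}{M(N)+1}\leq(N+1)^{M(N)+1}$. Writing $(N+1)^{M(N)+1}=N^{M(N)+1}(1+1/N)^{M(N)+1}\leq e^2\,N^{M(N)+1}\leq N^{M(N)+2}$ for $N\geq 8$ (using $M(N)\leq N$), the sparsity $M(N)+2\leq\tau(p^N)$ gives $|S_\tau\cap[1,p^N]|\leq N^{\tau(p^N)}=(\log_p n)^{\tau(n)}$ for $n=p^N$; the bound extends to all $n$ by monotonicity. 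The final ``in particular'' assertion then follows by specialising the construction to, say, $\tau(n)=\lfloor\log\log(n+3)\rfloor$, for which $\log|S\cap[1,n]|\leq\tau(n)\log\log_p n=o(\log n)$.

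The step demanding the most care is this inductive block construction: one must simultaneously keep $\ell_k\to\infty$ (so $\Sigma$ genuinely behaves like a F\o lner sequence) while ensuring $M(N)$ never exceeds the $\tau$-threshold at any intermediate $N$ between consecutive blocks. Both are reconciled by postponing each block $B_k$ until $\tau(p^{m_k})$ has grown enough to absorb $\ell_1+\cdots+\ell_k$ new points, which is possible precisely because $\tau$ is increasing and unbounded. No deeper obstacle arises, since the ergodic-theoretic content has been fully absorbed into Theorem~\ref{thm: mr}(1).
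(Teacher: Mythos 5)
Your proposal is correct and takes essentially the same approach as the paper: intervals of growing length placed where $\tau$ has grown enough, the intervals themselves serving as the F\o lner sequence, the semigroup generated by $p$ together with $p^j+1$ for $j\in A$, and Theorem~\ref{thm: mr}(1) supplying the rigidity, with your stars-and-bars count being only a cosmetic variant of the paper's bound of $\log_p n$ choices of exponent for each of the at most $1+|A\cap[1,\log_p n]|$ available generators. One small repair: the passage from $n=p^N$ to general $n$ is not literally ``by monotonicity'' (the set $S_\tau$ can contain elements strictly between $p^N$ and $n$, so comparing to the next power of $p$ inflates both base and exponent), but your counting argument applies verbatim to arbitrary $n$ with $N=\lfloor\log_p n\rfloor$, since the constraint still forces $k_0+\sum_a k_a\leq N$ and then $N^{\tau(p^N)}\leq(\log_p n)^{\tau(n)}$.
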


\begin{proof}
Let $\{l_n\}_{n=1}^\infty$ be a sequence of positive integers such that $\lim_{n\to\infty} l_n=\infty$ and
define $f(m)=\sum_{n=1}^{m}l_n$ for every positive integer $m$.

Define $g(m)=\min\{\log_pN\,|\tau(N)\geq 1+f(m)\}$ for every positive integer $m$.

Define $F_n=[p^{g(n)}, p^{g(n)}+l_n]$ for every positive integer $n$. Then  $\Sigma=\{F_n\}_{n=1}^\infty$ is a F\o lner sequence in $\mathbb{N}$. Denote $\cup F_n$ by $A$.

Let $S_\tau$ be the multiplicative semigroup generated by $p$ and $p^j+1$ for all $j\in A$.

Since $D_\Sigma(A)=1$, by (1) of Theorem~\ref{thm: mr}, a non-atomic ergodic $\times p$-invariant measure which is $\times q$-invariant for all $q$ in $S_f$ must be the Lebesgue measure.

The remaining thing is to prove that $|S_\tau\cap[1,n]|\leq (\log_p n)^{\tau(n)}$.

Every  positive integer $n$ locates in $[p^{g(m)}, p^{g(m+1)})$ for some nonnegative integer $m$.

Consider $S_\tau\cap [1, n]$.

Firstly $|\{j\in A|p^j+1\leq n\}|\leq l_1+l_2+\cdots+l_m=f(m)$. This means that $S_f\cap[1,n]$ has at most $1+f(m)$ generators.

Note that $|\{k\,| p^k\leq n\}|\leq \log_p n$.   So for each generator, there are at most $\log_p n$ choices for its powers.

Since $n$ is in $[p^{g(m)}, p^{g(m+1)})$, we have $g(m)\leq\log_p n$. Then  $1+f(m)\leq \tau(n)$ by the definition of $g$.

Hence $$|S_\tau\cap [1, n]|\leq (\log_p n)^{1+f(m)}\leq(\log_p n)^{\tau(n)}.$$
This proves the first half of the theorem.

For the second half,  choose $\tau(n)=\log \log (n+3)$ for every $n$ in $\mathbb{Z}^+$. Then for $S=S_\tau$, we obtain that
$$\lim_{n\to\infty}\frac{\log |S\cap[1,n]|}{\log n}\leq \lim_{n\to\infty}\frac{[\log \log (n+3)](\log\log_p n)}{\log n}=0.$$
\end{proof}

\begin{remark}
Furstenberg's conjecture asks for measure rigidity of a non-lacunary semigroup generated by two positive integers $p, q$ and this semigroup has asymptotically $(\log n)^2$  elements in $[1,n]$. Sataev, Einsiedler and Fish prove measure rigidity of a semigroup containing asymptotically $n^\alpha$  elements in $[1,n]$ for some $0<\alpha<1$. Theorem~\ref{prop: sg} says that for an arbitrary increasing function $\tau(n)$ with $\displaystyle\lim_{n\to\infty}\tau(n)=\infty$,  there is a semigroup with asymptotically $(\log n)^{\tau(n)}$  elements in $[1,n]$ for which measure rigidity still holds.  One can choose $\tau$ such that  the semigroup $S_\tau$ is sparsely scattered in $\mathbb{Z}^+$.
\end{remark}

\end{document}